\date{\today}
\def\ddzi{\frac{\partial}{\partial \z _{i}}}
\def\z{\zeta}
\def\Pn{\P^n}
\def\End{{\rm End}}
\def\deg{\text{deg}\,}
\def\w{\wedge}
\def\dbar{\bar\partial}
\def\C{{\mathbb C}}
\def\w{{\wedge}}
\def\P{{\mathbb P}}
\def\D{{\mathcal D}}
\def\S{{\mathcal S}}
\def\Hom{{\rm Hom\, }}
\def\codim{{\rm codim\,}}
\def\Im{{\rm Im\, }}
\def\E{{\mathcal E}}
\def\Ok{{\mathcal O}}
\def\L{{\mathcal L}}
\def\Q{{\mathcal Q}}
\def\Re{{\rm Re\,  }}
\def\L{{\mathcal L}}
\def\H{{\mathcal H}}
\def\J{{\mathcal J}}
\def\be{\begin{equation}}
\def\ee{\end{equation}}
\def\PM{{\mathcal PM}}
\newtheorem{thm}{Theorem}[section]
\newtheorem{lma}[thm]{Lemma}
\newtheorem{prop}[thm]{Proposition}
\theoremstyle{definition}
\theoremstyle{remark}
\newtheorem{preremark}{Remark}
\newtheorem{preex}{Example}
\newenvironment{remark}{\begin{preremark}}{\qed\end{preremark}}
\newenvironment{ex}{\begin{preex}}{\qed\end{preex}}
\numberwithin{equation}{section}
\title[]{Explicit representation of membership in polynomial ideals}
\begin{document}

\date{\today}

\author{Mats Andersson \& Elin G\"otmark}

\address{Department of Mathematics\\Chalmers University of Technology and the University of 
G\"oteborg\\S-412 96 G\"OTEBORG\\SWEDEN}

\email{matsa@math.chalmers.se}

\subjclass{32A26, 32A27, 32B15,  32C30}

\thanks{The first author was
  partially supported by the Swedish 
  Research Council. Support by the Institut Mittag-Leffler (Djursholm,
  Sweden) is also gratefully acknowledged.}


\begin{abstract}
We introduce a new division formula on projective space which provides 
explicit solutions to various polynomial division problems with sharp
degree estimates. We consider
simple examples as the classical Macaulay theorem as
well as a quite recent result  by Hickel, related to the effective
Nullstellensatz. We also obtain a related result that generalizes
Max Noether's classical $AF+BG$ theorem. 
\end{abstract}




\maketitle

\section{Introduction}

Let $F_1, \ldots, F_m$ be polynomials in $\C^n$ 
of degrees $d_1\ge d_2\ge\ldots\ge d_m$  and assume that
 $\Phi$ is a polynomial that 
 vanishes on the common zero set of the $F_j$. By Hilbert's Nullstellensatz
one can find polynomials $Q_j$ such that
\begin{equation}\label{busa}
\sum_j F_j Q_j=\Phi^\nu
\end{equation}
if the power $\nu$ is large enough.
A lot of attention has been paid to find effective versions, i.e., control of $\nu$ and
the degrees of $Q_j$ in terms of the degrees of $F_j$.
In  \cite{Br}  Brownawell obtained
bounds on $\nu$ and $\deg Q_j$ not too far from  the best possible, using 
a combination of algebraic and analytic methods.
Soon after that Koll\'ar \cite{Koll} obtained  by purely algebraic 
methods the optimal result: 

{\it If $\deg F_j=d_i$ (and $d_i\neq 2$), then
\eqref{busa} holds for some $\nu\le N_{ko}$ and $\deg(F_iQ_i)\le (1+\deg \Phi)N_{ko}$,
where $N_{ko}$ is $d_1\cdots d_m$ if $m\le n$ and
$N_{ko}=d_1\cdots d_{n-1}\cdot d_m$ otherwise.}

In particular, if $F_j$ have no common zeros in $\C^n$, then there are
polynomials $Q_j$ such that
\begin{equation}\label{bezout}
\sum_j F_jQ_j=1,
\end{equation}
with
$$
\deg   F_jQ_j\le  N_{ko}.
$$
The restriction $d_j\neq 2$ has  been removed by Jelonek, \cite{Jal};
even more interesting is that the method he uses, 
basically elimination theory, 
actually produces explicitly the desired polynomials $Q_i$.
\smallskip

A standard way to reformulate  problems of this kind is the following.
Let $z=(z_0,\ldots,z_n)$, $z'=(z_1,\ldots,z_n)$,
let $f^i(z)=z_0^{d_i}F_i(z'/z_0)$ be the homogenizations of $F_i$, and let
$\phi(z)=z_0^{\deg \Phi}\Phi(z'/z_0)$.
Then there is a solution to 
\begin{equation}\label{uppsala}
\Phi=\sum_i F_i Q_i
\end{equation}
with  $\deg F_i Q_i\le\rho$ if and only if
there are  $(\rho-d_i)$-homogeneous forms $q_i$ such that
\begin{equation}\label{hoppla}
\sum_i f^i q_i=z_0^{\rho-\deg\Phi}\phi.
\end{equation}

\smallskip
In \cite{A3},    \eqref{hoppla}
is considered as an equation in  vector bundles over $\P^n$ and it is 
shown  that if  $\phi$ annihilates a certain residue current
$R$ on $\P^n$, then there is indeed a global solution 
$q=(q_1,\ldots,q_m)$ provided that $\rho$
also satisfies an additional estimate from below.
In this new paper we introduce an explicit  division formula
\begin{equation}\label{div}
\psi(z)=\sum_i f^i(z)\cdot \int_{\P^n} \H_i U\psi+
\int_{\P^n}\H R\psi
\end{equation}
that holds for $\psi=z_0^{\rho-\deg\Phi}\phi$
if $\rho$ is large enough. Here  $U$ is a current
that is smooth outside the common zero set $Z\subset \P^n$ of the $f^i$,
$R$ is a residue current with support on $Z$, 
and $\H_i$ are  smooth in both variables and
homogeneous  polynomials of degrees $\rho-d_i$ in $z$. Thus 
\eqref{div} provides an explicit solution
(after dehomogenization) to \eqref{uppsala}
if $R\psi=0$.  Moreover, in many  cases one can choose the same
$\rho$ as in the implicit method, so we indeed get the same
degree estimate of the solution.

\begin{remark}
 Integral representation
of solutions to polynomial division problems was introduced in \cite{BB} and has been used
since then by many authors,  see, e.g., \cite{BGVY} and the survey article
\cite{TY}. However, in these  formulas the integration 
(or rather the action of a current on a test form) is performed over $\C^n$ so
a size estimate at infinity is needed to get rid of the
residue term. The formula  \eqref{div}  is intrinsic on
 $\P^n$ and so the residue term may vanish for more subtle reasons.
A similar formula  was introduced already
in \cite{A3} but with less precise degree estimates.
\end{remark}

We also notice in this paper that, by a more careful residue calculus,
the method in \cite{A3} admits more general results than stated there.
Therefore  we  start with some
examples of results that can be obtained in this way, and for which we 
now also have explicit representations. 
If   $\Phi$ belongs to the integral
closure of  the ideal  $(F_1,\ldots, F_m)$  then it follows from the Brian\c con-Skoda 
theorem, \cite{BS},  that one can take $\nu=\min(m,n)$. The following nice  result was proved
by Hickel, \cite{Hick}, as an affirmative answer to a conjecture  by Berenstein and Yger
in \cite{BY4}.

\begin{thm} \label{hickelthm}
[Hickel] If $\Phi$ is in the integral closure of $(F_1,\ldots, F_m)$, 
then  \eqref{busa} holds with $\nu$ equal to $\min(m,n)$  and 
\begin{equation}\label{hickest}
\deg(F_iQ_i)\le \min(m,n)(\deg \Phi +N_{hi}),
\end{equation}
where $N_{hi}$  is $d_1\cdots d_m$ if $m\le n$ and
$N_{hi}=\min(d_1^n, d_1\cdots d_m/d_m^{m-n})$ otherwise.
\end{thm}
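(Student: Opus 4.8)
My approach would be to realize the statement as a consequence of the division formula \eqref{div} applied to the homogenized data on $\Pn$, exactly as outlined in the introduction, with the main work being the residue-theoretic input that shows the residue term $\int_{\Pn}\H R\psi$ vanishes when $\Phi$ lies in the integral closure. First I would homogenize: pass from $F_i$, $\Phi$ to the forms $f^i$, $\phi$ on $\Pn$, and note by \eqref{hoppla} that it suffices to solve $\sum_i f^i q_i = z_0^{\rho-\deg\Phi}\phi$ by homogeneous forms $q_i$ of degree $\rho-d_i$, where we aim to take $\rho$ roughly $\min(m,n)(\deg\Phi+N_{hi})$ — more precisely one reduces to the case $\nu=1$ after first replacing $\Phi$ by $\Phi^{\min(m,n)}$, using the Brian\c con--Skoda principle that $\Phi^{\min(m,n)}$ is then genuinely in the ideal $(F_1,\dots,F_m)$ (this is where the power $\nu=\min(m,n)$ enters). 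So the real target is: given $\Psi := \Phi^{\min(m,n)} \in (F_1,\dots,F_m)$, produce $Q_i$ with the stated degree bound, and \eqref{div} does this provided the residue current $R$ (built from the $f^i$ together with the extra section cutting out $Z$) is annihilated by $\psi = z_0^{\rho-\deg\Psi}\phi$.

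The heart of the argument, and the step I expect to be the main obstacle, is the residue calculus: one must show that $R\psi = 0$ for $\rho$ as small as claimed, i.e. that multiplication by $z_0^{\rho-\deg\Psi}\phi$ kills $R$. Here $R$ is a Coleff--Herrera–type current associated to the family $f^1,\dots,f^m$ (suitably padded so that the relevant Hefer forms $\H$, $\H_i$ are polynomial of the advertised degrees). Since $\Phi$ is in the integral closure of $(F_1,\dots,F_m)$, standard results (Brian\c con--Skoda, Lejeune-Jalabert--Teissier) give that $\Phi^{\min(m,n)}$, hence $\phi$ times a power of $z_0$, annihilates the current; the delicate part is bookkeeping the degrees — one must control how large a power of $z_0$ is needed to absorb the "defect at infinity," and check that $N_{hi}$, with its case distinction $d_1\cdots d_m$ versus $\min(d_1^n, d_1\cdots d_m/d_m^{m-n})$, is exactly what comes out of estimating the degree of the current components via the Bézout-type bounds on the codimension of $Z$ and the vanishing order along the hyperplane at infinity. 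This is precisely the "more careful residue calculus" alluded to right before the theorem, sharpening the implicit result of \cite{A3}.

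Concretely the steps are: (1) homogenize and reduce to membership of $\Phi^{\min(m,n)}$ in the ideal; (2) write down the current $R$ and the Hefer forms $\H_i$, $\H$ entering \eqref{div}, verifying their homogeneity degrees $\rho-d_i$; (3) prove the annihilation $z_0^{\rho-\deg\Psi}\phi\cdot R = 0$ using the integral-closure hypothesis, with careful tracking of the $z_0$-power — this yields the admissible lower bound on $\rho$; (4) read off from \eqref{div} the homogeneous solution $q_i$, dehomogenize via \eqref{hoppla} to get $Q_i$, and finally (5) collect the degree estimate, checking it matches \eqref{hickest}. I would expect step (3) to require the most care, since it is where both the exponent $\nu=\min(m,n)$ and the fairly intricate constant $N_{hi}$ must be seen to be optimal within this method; steps (1), (2), (4), (5) are essentially formal once the framework of \cite{A3} and the division formula are in place.
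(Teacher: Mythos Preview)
Your outline is essentially the paper's approach: reduce Theorem~\ref{hickelthm} to the sharper Theorem~\ref{hickelthm2} by applying the latter to $\Phi^{\min(m,n)}$ (which satisfies \eqref{puss} since integral closure means $|\Phi|\le C|F|$), then combine with Hickel's estimate $\nu_\infty\le N_{hi}$ from the Refined B\'ezout theorem. The residue annihilation you flag as step~(3) is carried out in the paper exactly along the lines you sketch---pull back to a log resolution, decompose $R$ along the exceptional divisors, and use the integral-closure bound in $\C^n$ together with the definition of $\nu_\infty$ at infinity to see that $\phi\, z_0^{\lceil\mu\nu_\infty\rceil}$ kills every piece. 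The one ingredient you leave implicit is the invariant $\nu_\infty$ itself (defined via the normalized blowup along $\J$) and the fact that the bound $\nu_\infty\le N_{hi}$ is not proved here but imported from \cite{Hick} and \cite{Fu}; without that, your step~(3) would have no mechanism for producing the specific constant $N_{hi}$.
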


Notice that, applied to  $\Phi=1$,  one essentially  gets back Koll\'ar's theorem,
except for the factor  $\min(m,n)$ in front of $N_{hi}$. 


\smallskip
The assumption that $\Phi$ is in the integral closure means 
(is equivalent to) that
$|\Phi|\le C|F|$ locally in $\C^n$.
The following slightly stronger statement actually holds:

{\it If $\Phi$ is a polynomial
such that
\begin{equation}\label{puss}
|\Phi|\le C |F|^{\min(m,n)}
\end{equation}
locally in $\C^n$,  then  \eqref{uppsala}  has a solution such that
$$
\deg(F_iQ_i)\le \deg\Phi + \min(m,n)N_{hi}.
$$}

Notice that the condition \eqref{puss}, thanks to the Brian\c con-Skoda theorem, 
implies that
$\Phi$ is in the ideal $(F)$ in $\C^n$.

\smallskip
To prove Theorem~\ref{hickelthm}  Hickel introduces a nonnegative  rational number
$\nu_\infty$ that is a measure of the ``order of contact''  of 
$Z$ to the hyperplane at infinity,
see Section~\ref{proofsec} for the precise definition.
From the Refined Bezout theorem due to Fulton and MacPherson, see \cite{Fu}, Hickel
deduces, by quite  rough estimates, that 
\begin{equation}\label{rough}
\nu_{\infty} \le N_{hi};
\end{equation}
in most cases $\nu_{\infty}$ is much smaller. 
Theorem~\ref{hickelthm} is then an immediate corollary of  the following
result.
As usual,  $\ulcorner\alpha\urcorner$ denotes  the least integer
that is $\ge \alpha$.

\begin{thm} \label{hickelthm2}
 Assume that  $\Phi$ satisfies \eqref{puss}.

\smallskip
\noindent (i)  Then \eqref{uppsala} has a solution such that
\begin{equation}\label{hickest2}
\deg(F_iQ_i)\le \deg \Phi +\ulcorner\min(m,n)\nu_{\infty}\urcorner
\end{equation}
if $m\le n$. In case $m>n$ the estimate is the maximum of the right hand side
and the number $d_1+\ldots+ d_{n+1}-n$.

\smallskip
\noindent (ii) The integral formula provides  a   solution to  \eqref{uppsala}
with 
\begin{multline}\label{hickest3}
\deg(F_iQ_i)\le 
\\
\max\big(\deg \Phi +\ulcorner \min(m,n)\nu_{\infty}\urcorner, d_1+\cdots +d_{min(m,n+1)}-n\big).
\end{multline}
\end{thm}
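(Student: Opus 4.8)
The plan is to set up equation \eqref{hoppla} as an identity in vector bundles over $\P^n$ and produce the coefficients $q_i$ together with the cohomological vanishing that kills the residue term, following the scheme of \cite{A3} but tracking degrees carefully. Concretely, I would let $\O(\rho-d_i)\to \O$ be the maps given by multiplication by the homogenized forms $f^i$, form the induced Koszul-type complex, and construct an associated residue current $R=R^f$ with support on $Z\subset\P^n$; the point of the hypothesis \eqref{puss}, via the Brian\c con--Skoda principle for residue currents, is precisely that $\psi=z_0^{\rho-\deg\Phi}\phi$ annihilates $R$ as soon as the twisting degree is large enough relative to $\min(m,n)$. Thus the obstruction to solving \eqref{hoppla} is a $\dbar$-closed $(0,*)$-current with values in a twist of $\O$, and one solves it away provided the relevant cohomology group $H^{0,*}(\P^n,\O(\ell))$ vanishes.

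Next I would feed this into the explicit division formula \eqref{div}. Here one builds a weighted Koppelman--type formula on $\P^n\times\P^n$ whose weight factor is assembled from (a) a section $s$ of a bundle whose zero set is the diagonal, (b) the tautological section built from the $f^i(z)$ and a smooth metric, giving the current $U$ smooth off $Z$, and (c) enough extra powers of the section cutting out the hyperplane at infinity to make the kernels $\H_i$ honest polynomials of degree $\rho-d_i$ in $z$ and the whole expression well defined on $\P^n$. Applying the formula to $\psi$ and using $R\psi=0$ leaves exactly $\psi=\sum_i f^i\int \H_i U\psi$, and dehomogenizing yields the $Q_i$ with $\deg(F_iQ_i)\le \rho$. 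The content of part (ii) is then the bookkeeping: the formula is valid, and the kernels have the stated polynomial degrees, as soon as $\rho$ exceeds the explicit threshold, which one checks to be the maximum appearing in \eqref{hickest3} — the term $\ulcorner\min(m,n)\nu_\infty\urcorner$ coming from the Brian\c con--Skoda/annihilation step and the term $d_1+\cdots+d_{\min(m,n+1)}-n$ coming from the positivity needed to make the weight and the Koszul complex behave (essentially the regularity of the structure sheaf of $Z$, which is where $\nu_\infty$ and the Fulton--MacPherson Bezout estimate enter).

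For part (i), I would argue that one does not need the full explicit formula: once the current $R$ is in hand and $R\psi=0$, the existence of a global solution $q$ to \eqref{hoppla} follows from solving a sequence of $\dbar$-equations on $\P^n$ in the appropriate twists, and the degree needed is governed only by the cohomological vanishing $H^{q}(\P^n,\O(\ell))=0$ for $q\ge1$ and $\ell\ge -n$, which is slightly weaker than what the integral kernels demand. This explains why the estimate in (i) can replace the ``$+1$'' in $\min(m,n+1)$ by $n+1$ only in the case $m>n$ and otherwise needs no positivity correction at all when $m\le n$. I would also record that when $\Phi=1$ (so $\deg\Phi=0$ and \eqref{puss} is automatic with the convention $|F|^{\min(m,n)}$ bounded below off $Z=\emptyset$) one recovers the Macaulay/Kollár-type bound, as a consistency check.

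The main obstacle I expect is the Brian\c con--Skoda annihilation step in the global, twisted setting: one must show that the hypothesis \eqref{puss}, which is a purely local size estimate in $\C^n$, forces $z_0^{\rho-\deg\Phi}\phi$ to annihilate the global residue current $R$ on $\P^n$ for the stated $\rho$, and this requires both the correct definition of $R$ (so that its annihilator is the integral closure of the relevant power of the ideal sheaf, localized correctly along the hyperplane at infinity) and a careful analysis of the order of contact of $Z$ with that hyperplane — precisely the quantity $\nu_\infty$, whose relation to the $d_i$ via the Refined Bezout theorem supplies the crude bound \eqref{rough} and hence Theorem~\ref{hickelthm}. The degree counting in the integral formula (part (ii)) is then comparatively mechanical once the threshold from this step is known.
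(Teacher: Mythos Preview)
Your overall architecture matches the paper's: build the Koszul complex on $\P^n$, form the residue current $R$, show $R\psi=0$ for $\psi=z_0^{\rho-\deg\Phi}\phi$ with the stated $\rho$, then invoke either the cohomological vanishing of Proposition~\ref{ormvrak} for part~(i) or the explicit division formula of Section~\ref{formelsec} for part~(ii). You also correctly locate the crux of the argument as the annihilation step and correctly flag that the local hypothesis \eqref{puss} on $\C^n$ does not by itself control $R$ along the hyperplane $H=\{z_0=0\}$.

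What is missing is the mechanism that closes this gap, and it is not a Brian\c con--Skoda principle applied directly. The paper passes to the normalization $\nu\colon X_+\to\P^n$ of the blowup along $\J=(f^1,\dots,f^m)$ and then to a log resolution $\pi\colon\tilde X\to\P^n$; the residue $R_k$ decomposes as $\sum_j R_{kj}$ with $R_{kj}$ supported on $\pi(Y_j)$, and one shows (via Proposition~\ref{uppstuds}) that only those $j$ for which $\pi(Y_j)$ is a \emph{distinguished variety} contribute. On such a divisor $Y_i^+$ not contained in $H$, the hypothesis \eqref{puss} forces $\nu^*\phi$ to vanish to order $\mu m_i^+$; on a distinguished $Y_i^+$ contained in $H$, the very definition of $\nu_\infty$ forces $\nu^*h^{\ulcorner\mu\nu_\infty\urcorner}$ to vanish to that order. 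Hence $\phi h^{\ulcorner\mu\nu_\infty\urcorner}$ satisfies the \emph{global} estimate $|\phi h^{\ulcorner\mu\nu_\infty\urcorner}|\le C|f|^\mu$ on $\P^n$, and the pullback to $\tilde X$ then visibly kills each $\tilde R_{kj}$ for $k\le\mu$. This decomposition along distinguished varieties is the missing idea; without it one cannot see why exactly $\ulcorner\mu\nu_\infty\urcorner$ powers of $z_0$ suffice.

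Two minor corrections to your bookkeeping: the term $d_1+\cdots+d_{\min(m,n+1)}-n$ arises purely from the condition \eqref{alfred2} needed for Proposition~\ref{ormvrak} (respectively for the Hefer morphism in the integral formula), not from $\nu_\infty$ or any regularity of $\O_Z$; and the Refined Bezout estimate \eqref{rough} is not used in the proof of Theorem~\ref{hickelthm2} at all---it only enters when one deduces Theorem~\ref{hickelthm} as a corollary.
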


Part $(i)$ is a variant of Theorem~2.1' in \cite{Hick}.
By an extra argument due to Hickel one can get rid  of the
$\ulcorner\ \urcorner$ in \eqref{hickest2} and thus gain
one unit when the number inside is not an integer.

\smallskip
We can  assume from the beginning 
that $\Phi$ is in the ideal $(F_1, \ldots, F_m)$  (but not assuming
\eqref{puss}) and ask for  an estimate
of the degrees of $Q_j$. Here is (as far as we know) a new result in this direction:

\begin{thm}\label{newthm}
Assume that $\codim\{F_1=\ldots= F_m=0\}\ge m $ in $\C^n$ and that 
$\Phi$ is in the ideal $(F_1,\ldots,  F_m)$ in $\C^n$.  
 Then \eqref{uppsala} has a solution such that
\begin{equation*} 
\deg(F_iQ_i)\le \deg \Phi +\ulcorner m\nu_{\infty}\urcorner,
\end{equation*}
whereas the integral formula provides  a   solution 
with 
$$
\deg(F_iQ_i)\le 
\max\big(\deg \Phi +\ulcorner m\nu_{\infty}\urcorner, d_1+\cdots +d_m-n\big).
$$
\end{thm}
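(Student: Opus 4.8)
The plan is to follow the same strategy that underlies Theorem~\ref{hickelthm2}, namely to reduce the affine membership statement to a statement about the homogenized forms $f^i$ on $\P^n$ and then to feed it into the division formula \eqref{div}, but to replace the integral-closure input (the Brian\c con--Skoda bound) by the complete intersection hypothesis $\codim\{F=0\}\ge m$. First I would homogenize: set $f^i(z)=z_0^{d_i}F_i(z'/z_0)$ and $\phi=z_0^{\deg\Phi}\Phi(z'/z_0)$ as in \eqref{hoppla}, and recall that a solution to \eqref{uppsala} with $\deg F_iQ_i\le\rho$ is the same thing as a tuple of $(\rho-d_i)$-homogeneous forms $q_i$ with $\sum_i f^iq_i=z_0^{\rho-\deg\Phi}\phi$. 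The key point is that, since $\Phi\in(F_1,\dots,F_m)$ in $\C^n$, the form $\psi=z_0^{\rho-\deg\Phi}\phi$ annihilates the residue current $R$ associated to $(f^i)$ on $\P^n$ as soon as the power of $z_0$ is large enough to kill the part of $R$ supported on the hyperplane at infinity $\{z_0=0\}$; this is exactly where $\nu_\infty$ enters, and the bound $\ulcorner m\nu_\infty\urcorner$ should come out of the estimate of how high a power of $z_0$ is needed, together with the degree bookkeeping in \eqref{div}.

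The role of the complete intersection hypothesis is to control the residue current $R$: when $\codim\{F=0\}\ge m$, the relevant current is the Coleff--Herrera product of the $f^i$ (a single top-degree piece, rather than a sum of pieces of several bidegrees coming from the full Koszul--type construction), so the annihilator of $R$ on the affine part $\{z_0\neq 0\}$ is precisely the ideal $(F_1,\dots,F_m)$ by the duality principle for complete intersections. Thus the hypothesis $\Phi\in(F)$ immediately gives $R\psi=0$ on $\{z_0\neq 0\}$, and only the contribution on $\{z_0=0\}$ remains to be annihilated, which a suitable power $z_0^{\rho-\deg\Phi}$ does. The second, cleaner degree bound $d_1+\cdots+d_m-n$ is the one forced by the structure of the Hefer forms $\H$, $\H_i$ in \eqref{div} in the complete intersection (codimension $m$) situation: the integral formula intrinsically requires $\rho$ at least this large for the Hefer forms to have the right degrees, and one takes the max of this with the $\nu_\infty$-bound. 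I would verify the degree count by tracking the homogeneities through \eqref{div}: $\H_i U\psi$ should produce an $(\rho-d_i)$-form in $z$, which pins down $\rho$, and matching this against the power of $z_0$ needed for $R\psi=0$ gives $\deg\Phi+\ulcorner m\nu_\infty\urcorner$.

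For part of the argument I can quote Theorem~\ref{hickelthm2} almost verbatim, since the $m\le n$ case of that theorem already handles a complete intersection when $m\le n$; the genuinely new content is the case $m>n$ (where $\min(m,n)=n<m$ in Theorem~\ref{hickelthm2}, but here the codimension hypothesis lets us use the full $m$), and checking that the residue current is still the expected one and that no extra term $d_1+\cdots+d_{n+1}-n$ is needed — instead one gets $d_1+\cdots+d_m-n$. So the plan is: (1) homogenize and set up \eqref{div}; (2) invoke the duality principle for complete intersections to get $R\psi=0$ on the affine chart from $\Phi\in(F)$; (3) estimate the power of $z_0$ needed to kill $R$ at infinity in terms of $\nu_\infty$, obtaining the first bound; (4) separately record the degree constraint $d_1+\cdots+d_m-n$ coming from the Hefer forms, and take the maximum; (5) dehomogenize to read off $Q_i$ and conclude $\deg F_iQ_i\le\rho$.

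The main obstacle I anticipate is step~(3): making the relationship between $\nu_\infty$ and the required power of $z_0$ precise in the complete intersection case, i.e.\ showing that $\ulcorner m\nu_\infty\urcorner$ (with the factor $m$, not $n$) is enough to annihilate the part of the Coleff--Herrera current supported on $\{z_0=0\}$. This requires a careful local analysis near the hyperplane at infinity — resolving the ideal generated by the $f^i$ and $z_0$ there, comparing the order of vanishing of $z_0$ along the exceptional components with the multiplicities that appear in the Coleff--Herrera product — and it is exactly the point where the number $\nu_\infty$ was designed (by Hickel) to measure the order of contact of $Z$ with infinity. The bundle/degree bookkeeping in steps~(1) and (4) is routine once the framework of \cite{A3} and the formula \eqref{div} are in place, and step~(2) is a standard application of the duality theorem, so the weight of the proof really sits in the quantitative residue estimate at infinity.
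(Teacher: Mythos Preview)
Your plan is correct and matches the paper's argument: split $R=\mathbf{1}_{\C^n}R+\mathbf{1}_H R$, kill the first piece by the duality theorem for the Coleff--Herrera product (using that $f$ is a complete intersection on $\C^n$), and kill the second by the factor $z_0^{\ulcorner m\nu_\infty\urcorner}$ via the log-resolution decomposition $R_k=\sum_j R_{kj}$, where only terms indexed by distinguished varieties survive; then apply Proposition~\ref{ormvrak} for part~(i) and the Hefer/division machinery of Section~\ref{formelsec} for part~(ii).

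One point of confusion to straighten out: the codimension hypothesis $\codim\{F=0\}\ge m$ in $\C^n$ forces $m\le n$ whenever the zero set is nonempty, so $m=\min(m,n)$ and there is no separate ``case $m>n$'' to handle. The genuinely new content relative to Theorem~\ref{hickelthm2} is not a different range of $m$ but the fact that the two pieces $\mathbf{1}_{\C^n}R$ and $\mathbf{1}_H R$ are annihilated by \emph{different} mechanisms (algebraic membership via duality on $\C^n$, and a pure power of $z_0$ at infinity), whereas in Theorem~\ref{hickelthm2} the product $\phi\,z_0^{\ulcorner\mu\nu_\infty\urcorner}$ kills all of $R$ via the single size estimate \eqref{olle}. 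The bound $d_1+\cdots+d_m-n$ in part~(ii) is simply $d_1+\cdots+d_{\min(m,n+1)}-n$ specialized to $m\le n$, coming from the constraint $\kappa\ge d_1+\cdots+d_{\min(m,n+1)}$ needed to build the Hefer morphism; and in part~(i) no extra term appears precisely because \eqref{alfred2} is vacuous when $m\le n$. The paper also notes a shortcut you might include: if $Z$ has no component in $H$ then $\mathbf{1}_H R=\mathbf{1}_H R_m$ has support of codimension $>m$ and vanishes outright by Proposition~\ref{uppstuds}, so $\nu_\infty=0$ in that case.
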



\begin{remark}
If $Z$ has no  irreducible component at all contained in the
hyperplane at infinity, then $\nu_\infty$  shall be
interpreted as  $0$ in Theorem~\ref{newthm},   and then  we get back
Theorem~1.2 in \cite{A3}. In case   $m=n$ this is
the classical so-called $AF+BG$ theorem of Max Noether, \cite{Noe}.

\smallskip
In particular, if  $m\ge n+1$ and $Z$ is empty,  then
we get back the classical Macaulay  theorem, \cite{Macaul}, with an explicit
formula representing the membership. Such a formula has also been obtained
in \cite{Boy} relying on \cite{BH}.
\end{remark}

We have
analogous results for submodules of $\C[z_1,\ldots,z_n]^r$ 
 rather than just ideals.
Let $F$ be a polynomial mapping $\C^n\to\Hom(\C^m,\C^r)$ whose
columns  $F^j$ have degrees $\le d_j$, $j=1,\ldots,m$. We also assume
that $F$  is generically surjective in $\C^n$, which is equivalent to that
the ideal $\det  F$, generated by the $r\times r$ minors of $F$,
is nontrivial.
 Let $f$ be the
associated matrix whose columns $f^j$ are $d_j$-homogeneous polynomials 
and let $Z$ be the set in $\P^n$ where $\det f$ is vanishing.
Moreover, let  $\nu_{\infty}$  be associated to the ideal
$\det f$. By the estimate \eqref{rough} above we get (without being too precise)
$$
\nu_{\infty}\le   (d_1\cdots d_r)^{\min(n, m!/(m-r)!r!)}.
$$
Let $\Phi$ be an $r$-column of polynomials.
The proper analogue of \eqref{puss} is, cf.,  \cite{A4}, 
that 
\begin{equation}\label{matrisint}
\|\Phi\|\le C|\det F|^{\min(n,m-r+1)}
\end{equation}
holds locally, 
where $\|\Phi\|$ is a somewhat stronger norm than the natural norm $|\Phi|$;
i.e.,  $\|\Phi\|\le |\Phi|$.
We have the following generalization of the previous
theorems.  

\begin{thm} \label{hickelthm2matris}
 Assume that  $F$ is an $r\times m$ matrix of polynomials as above
with columns $F^j$
and $\Phi$ is an $r$-column of polynomials.
Assume that 
either \eqref{matrisint} holds  locally in $\C^n$, 
or that 
\begin{equation}\label{bolly}
\codim\{\det  F=0\}\ge m-r+1  
\end{equation}
in $\C^n$ and  
$\Phi$ is in the module generated by the columns $F^j$.

\smallskip
\noindent (i)  There are polynomials $Q_j$ such that 
\eqref{uppsala} holds and 
\begin{equation}\label{hickest5}
\deg(F^iQ_i)\le \deg\Phi + \ulcorner\min(n,m-r+1)\nu_{\infty}\urcorner
\end{equation}
if $m\le n+r-1$. 
In case $m>n$ the estimate is the maximum of the right hand side
and the number $d_1+\cdots + d_{n+r} -n$.

\smallskip
\noindent (ii) The division formula provides a  solution to  \eqref{uppsala}
such that 
\begin{multline}\label{hickest3matris}
\deg(F^iQ_i)\le 
\\
\max\big(\deg \Phi +\ulcorner\min(n,m-r+1)\nu_{\infty}\urcorner, d_1+\cdots +
d_{\min(m,n+r)}-n\big).
\end{multline}
\end{thm}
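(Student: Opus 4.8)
The plan is to reduce the matrix case to the scalar machinery that underlies Theorems \ref{hickelthm2} and \ref{newthm}, working throughout with the homogenized data $f\colon\P^n\to\Hom(\C^m,\C^r)$ and the residue current attached to $f$. First I would set up the Koszul-type complex associated with $f$: since $f$ is generically surjective, one obtains a generically exact complex of bundles $E_\bullet$ over $\P^n$, with $E_0=\O^r$ and the twists chosen so that the map out of $E_k$ is built from $k$-fold minors of $f$; the degrees $d_j$ of the columns dictate the line-bundle twists, and the relevant residue current $R=R^f$ is the one produced from this complex as in \cite{A4}. The hypothesis \eqref{matrisint}, namely $\|\Phi\|\le C|\det F|^{\min(n,m-r+1)}$ locally, is exactly the condition that guarantees (via the Brian\c con--Skoda type estimate for modules from \cite{A4}) that $\phi\cdot R=0$, where $\phi$ is the homogenization of $\Phi$; in the alternative hypothesis \eqref{bolly} the codimension condition forces the current $R$ to consist of a single component of the expected bidegree, and membership of $\Phi$ in the module then likewise yields $\phi R=0$. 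This annihilation is the crucial input.

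Next I would feed this into the division formula \eqref{div}, now in its module-valued form: for $\psi=z_0^{\rho-\deg\Phi}\phi$ one has
\begin{equation*}
\psi(z)=\sum_i f^i(z)\cdot\int_{\P^n}\H_i U\psi+\int_{\P^n}\H R\psi,
\end{equation*}
where $U$ is the corresponding $\Hom$-valued current smooth off $Z$, $R$ is the residue current above, and the Hefer-type forms $\H_i$, $\H$ are constructed explicitly and are homogeneous of the correct degrees in $z$. The whole construction depends on a parameter $\rho$, and the formula is valid once $\rho$ is large enough to make the kernel well defined and the integrals convergent; the point of the theorem is to pin down that threshold. Because $\phi R=0$, the residue term drops out, and after dehomogenizing (dividing through by $z_0^{\rho-\deg\Phi}$) the first term gives $\Phi=\sum_i F^i Q_i$ with $Q_i$ the dehomogenization of $z\mapsto\int_{\P^n}\H_i U\psi$, which by homogeneity is a polynomial with $\deg(F^iQ_i)\le\rho$.

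The degree bookkeeping is then the heart of the matter, and I expect it to be the main obstacle. One must show that the formula is admissible as soon as
\begin{equation*}
\rho\ge\max\big(\deg\Phi+\ulcorner\min(n,m-r+1)\nu_\infty\urcorner,\ d_1+\cdots+d_{\min(m,n+r)}-n\big),
\end{equation*}
which requires two separate vanishing statements for cohomology on $\P^n$. The term $d_1+\cdots+d_{\min(m,n+r)}-n$ comes from the Castelnuovo--Mumford regularity of the twisted complex $E_\bullet$ — one needs the relevant $H^{q}(\P^n,\O(\rho-(d_{j_1}+\cdots+d_{j_k})))$ along the complex to vanish, and a standard count of the worst twist along the complex gives exactly this bound. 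The term $\deg\Phi+\ulcorner\min(n,m-r+1)\nu_\infty\urcorner$ is the genuinely delicate one: here $\nu_\infty$ measures the order of contact of $Z$ with the hyperplane at infinity $\{z_0=0\}$ (with the convention $\nu_\infty=0$ when no component of $Z$ lies at infinity), and one must show that for $\rho$ above this value the current $R\psi$, which already vanishes as a current, in fact has a primitive giving a well-defined global section — equivalently that the obstruction living on $Z$ near infinity is killed. This is where Hickel's estimate, rephrased via the Fulton--MacPherson refined Bézout theorem, enters: one bounds $\nu_\infty$ by the stated products of the $d_j$ to recover the cruder but explicit form of the estimate. For part (i) one invokes instead the implicit solvability result (Theorem 1.2 of \cite{A3} and its module analogue, cf.\ \cite{A4}), which yields a solution — not necessarily the one from the formula — under the slightly weaker threshold with $\ulcorner\ \urcorner$; the gain of removing the ceiling in the scalar ideal case is an extra argument of Hickel that I would not reproduce here. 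Finally, the case $m\le n+r-1$ versus $m>n$ is handled by checking which of the two maxima is active: when $m$ is small the regularity term is dominated, when $m$ is large it may dominate and must be carried along, exactly as in the scalar statements; the matrix case differs only in that the relevant Koszul length is $\min(n,m-r+1)$ rather than $\min(n,m)$, and the relevant number of columns entering the regularity bound shifts from $n+1$ to $n+r$.
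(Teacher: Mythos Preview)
Your overall architecture is right --- build the appropriate complex, show the residue is annihilated by $\psi=z_0^{\rho-\deg\Phi}\phi$ for suitable $\rho$, then invoke either Proposition~\ref{ormvrak} for part~(i) or the division formula for part~(ii) --- and you correctly identify the term $d_1+\cdots+d_{\min(m,n+r)}-n$ as coming from the cohomological vanishing condition \eqref{alfred}. The paper proceeds exactly this way, using the Buchsbaum--Rim complex (not merely a ``Koszul-type'' complex; the distinction matters for the degree bookkeeping, since the length and twists of the Buchsbaum--Rim complex are what produce the shifts $m-r+1$ and $n+r$).

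However, there is a genuine gap in your handling of the $\nu_\infty$ term. You assert that the local hypothesis \eqref{matrisint} (or \eqref{bolly} plus membership) already gives $\phi R=0$, and that the $\nu_\infty$ constraint is about finding a ``primitive'' or killing an ``obstruction'' afterwards. This is backwards. The hypotheses are conditions in $\C^n$ only; they say nothing about the part of $R$ supported on the hyperplane $H=\{z_0=0\}$, and in general $\phi R\neq 0$ on $\P^n$. The role of $\nu_\infty$ is precisely to supply the exponent of $z_0$ needed to annihilate the portion of $R$ living at infinity: one shows that $z_0^{\ulcorner\mu\nu_\infty\urcorner}\phi$ annihilates $R$, and \emph{this} is what forces $\rho\ge\deg\Phi+\ulcorner\mu\nu_\infty\urcorner$. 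The paper carries this out (for the scalar case, with the matrix case declared analogous) by passing to a log resolution, decomposing $R$ into pieces $R_{kj}$ indexed by divisors $Y_j$, showing that only those $Y_j$ mapping to distinguished varieties contribute, and then checking separately that the local hypothesis kills the pieces over $\C^n$ while $z_0^{\ulcorner\mu\nu_\infty\urcorner}$ kills the pieces over $H$ (by the very definition of $\nu_\infty$). Your proposal does not contain this mechanism, and without it the argument does not go through.
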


Notice that for a generic $r\times m$-matrix $F$, the zero set
of $\det F$ has codimension $m-r+1$. 

\smallskip
Again $\nu_\infty$ is $0$ if \eqref{bolly} holds and
$Z$ has no irreducible component
contained in the hyperplane at infinity
\smallskip

One can  obtain sharper  results for special ideals, 
for instance  determinantal ideal, and product ideals,  cf., \cite{A8}.
However we skip precise formulations since our aim
is mainly  to give examples of  
various applications of the new representation formulas.

\smallskip
We are grateful to Michel Hickel and Alain Yger for important remarks
on a preliminary version of this paper. We are also grateful to the
anonymous referee for his careful reading and valuable suggestions.

\section{The  basic setup}\label{basic}

Assume that 
\begin{equation}\label{ecomplex}
0\to E_N\stackrel{f_N}{\longrightarrow}\ldots\stackrel{f_3}{\longrightarrow} 
E_2\stackrel{f_2}{\longrightarrow}
E_1\stackrel{f_1}{\longrightarrow}E_0\to 0
\end{equation}
is a generically exact complex of Hermitian vector bundles  over $\P^n$ and
let $Z$ be the algebraic set where \eqref{ecomplex}  is not pointwise exact. 
In \cite{AW1} were  introduced   currents 
$$
U=U_1+\ldots +U_N, \quad R=R_1+\ldots+ R_N
$$
associated to \eqref{ecomplex} with the following properties:
The current $U$ is smooth outside $Z$, $U_k$ are $(0,k-1)$-currents
that take values in $\Hom(E_0, E_k)$,  and
$R_k$ are  $(0,k)$-currents with support on $Z$, taking values in 
$\Hom(E_0, E_k)$. Moreover, they satisfy the relations
\begin{equation}\label{alban}
f_1 U_1=I_{E_0}, \quad  f_{k+1}U_{k+1}-\dbar U_k= -R_k,\ k\ge 1,
\end{equation}
which can be compactly written as
$
\nabla_f U=I_{E_0}-R
$
if 
$
\nabla_f=f-\dbar=f_1+f_2+\cdots f_N-\dbar.
$
We  have the  corresponding complex of locally free sheaves 
$$
0\to \mathcal{O}(E_N)\stackrel{f_N}{\longrightarrow}\ldots\stackrel{f_3}{\longrightarrow} 
\mathcal{O}(E_2)\stackrel{f_2}{\longrightarrow}
\mathcal{O}(E_1)\stackrel{f_1}{\longrightarrow}\mathcal{O}(E_0).
$$
If $\psi$ is  a holomorphic section of $\mathcal{O}(E_0)$ 
that annihilates $R$, i.e., the current $R\phi$ vanishes,  then $\psi$ is 
in the sheaf $\J=\Im f_1$, see \cite{AW1}, Proposition~2.3.

In this paper we will only consider bundles that are direct sums of line bundles.
Let $\mathcal{O}(\ell)$ be the holomorphic line bundle over $\P^n$ whose sections are 
(naturally identified with)  $\ell$-homogeneous functions in $\C^{n+1}$.
Moreover, let $E_{k}^j$ be disjoint trivial line bundles over $\P^n$ 
with basis elements $e_{k,j}$, and let
\begin{equation}\label{dirsum}
E_k=\big(E^1_k\otimes \mathcal{O}(-d^1_k)\big)\oplus\cdots \oplus 
\big(E^{r_k}_k\otimes \mathcal{O}(-d_k^{r_k})\big).
\end{equation}
Then 
$$
f_k=\sum_{ij} f_k^{ij}e_{k-1,i}\otimes e_{k,j}^*
$$
are matrices of homogeneous forms, here $e_{k,j}^*$ are the dual elements,  and 
$$
\deg f_k^{ij}= d_{k-1}^i-d_k^j.
$$
We equip $E_k$ with the natural  Hermitian metric, i.e., such that 
$$
|\xi(z)|^2_{E_k}=\sum_{j=1}^{r_k}|\xi_j(z)|^2 |z|^{2d^j_k},
$$
if  $\xi=(\xi_1,\ldots,\xi_{r_k})$.

The Dolbeault cohomology $H^{0,q}(\P^n, \mathcal{O}(r))$ vanishes for all
$r$ if $1\le q\le n-1$ and also for $q=n$ if $r\ge -n$.
From \eqref{alban} and a simple homological argument
we get the following proposition,
see \cite{AW1} for details.

\begin{prop}\label{ormvrak}
Assume that  $\psi$ is a holomorphic section of $\mathcal{O}(\rho)\otimes E_0$ such that 
$R\psi=0$.
If 
\begin{equation}\label{alfred}
N\le n \  \  \text{or}\  \  \rho -\max_i d_{n+1}^i \ge -n,
\end{equation}
then there is a global holomorphic section $q$
of  $\mathcal{O}(\rho)\otimes E_1$ such that  $f_1 q=\psi$.
\end{prop}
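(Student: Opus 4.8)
The plan is to run the standard homological descent along the complex \eqref{ecomplex}, using the relations \eqref{alban} together with the Dolbeault vanishing stated just before the proposition. First I would set $q_1=U_1\psi$, which is a well-defined current of degree $(0,0)$ since $\psi$ annihilates $R$ (so in particular $R_1\psi=0$, which is what makes the first relation in \eqref{alban} give $f_1(U_1\psi)=\psi$ after one checks the current $U_1\psi$ is actually smooth; but rather than argue smoothness directly I would instead produce $q$ by correcting $U_1\psi$ with $\dbar$-potentials). Concretely, from $\nabla_f U=I_{E_0}-R$ and $R\psi=0$ we get $\nabla_f(U\psi)=\psi$, i.e. $f_1 U_1\psi=\psi$ and $f_{k+1}U_{k+1}\psi=\dbar(U_k\psi)$ for $k\ge 1$. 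So $U_1\psi$ is a $(0,0)$-current solving the equation, but it need not be holomorphic; its obstruction to holomorphy is $\dbar U_1\psi=f_2U_2\psi$, which lies in the image of $f_2$.

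Next I would peel this obstruction off one step at a time. The section $U_N\psi$ is a $(0,N-1)$-current with values in $\mathcal O(\rho)\otimes E_N$, and it is $\dbar$-closed (since $U_{N+1}=0$), hence it defines a class in $H^{0,N-1}(\P^n,\mathcal O(\rho)\otimes E_N)$. By \eqref{dirsum} this bundle is a direct sum of $\mathcal O(\rho-d_N^j)$, and the Dolbeault cohomology of such line bundles in bidegree $(0,q)$ with $1\le q\le n-1$ always vanishes, while in bidegree $(0,n)$ it vanishes under the twist condition $\rho-\max_j d_{n+1}^j\ge -n$ — here the relevant index shift makes $N-1$ play the role of $q$, so the hypothesis \eqref{alfred} (either $N\le n$, forcing $N-1\le n-1$, or the twist estimate when $N-1=n$) is exactly what guarantees this vanishing. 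Thus there is a smooth section $w_N$ of $\mathcal O(\rho)\otimes E_N$ with $\dbar w_N=U_N\psi$. Replacing $U_{N-1}\psi$ by $U_{N-1}\psi - f_N w_N$ does not change $f_{N-1}(U_{N-1}\psi)=\dbar(U_{N-2}\psi)$ (since $f_{N-1}f_N=0$) but now makes the modified $U_{N-1}\psi$ a $\dbar$-closed $(0,N-2)$-current. Iterating this correction down the complex — at step $k$ using vanishing of $H^{0,k-1}(\P^n,\mathcal O(\rho)\otimes E_k)$, which again follows from \eqref{alfred} since $k-1\le N-1$ — I arrive after finitely many steps at a corrected current $\tilde q_1 = U_1\psi - f_2 w_2$ that is $\dbar$-closed of bidegree $(0,0)$, hence a genuine holomorphic section $q$ of $\mathcal O(\rho)\otimes E_1$, and still satisfies $f_1 q = f_1 U_1\psi - f_1 f_2 w_2 = \psi$.

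The only real point to watch is the bookkeeping on the twist: one must check that at every stage $k$ the bundle $\mathcal O(\rho)\otimes E_k$ decomposes into line bundles $\mathcal O(\rho-d_k^j)$ whose degrees are large enough (or whose bidegree is in the automatically-vanishing range $1\le q\le n-1$) so that the cohomology used actually vanishes. The generically exact hypothesis is needed only to ensure the currents $U,R$ of \cite{AW1} exist and satisfy \eqref{alban}; everything else is the purely formal homological algebra sketched above, and the main obstacle — such as it is — is simply verifying that the indexing in \eqref{alfred} lines up with the bidegrees appearing in the descent, which is where the dichotomy $N\le n$ versus the twist bound enters.
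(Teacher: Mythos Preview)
Your proposal is correct and is precisely the ``simple homological argument'' the paper alludes to (the paper itself does not spell out a proof, only points to \eqref{alban} and refers to \cite{AW1}). One small bookkeeping point worth tightening: when $N>n+1$ the currents $U_k\psi$ for $k>n+1$ are $(0,k-1)$-currents on an $n$-dimensional manifold and hence vanish automatically, so the descent really starts at $k=\min(N,n+1)$; this is why the twist condition in \eqref{alfred} involves $d_{n+1}^i$ rather than $d_N^i$, and it is the only step where the degree of the line-bundle summands matters.
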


Given an ideal sheaf $\J=(f^1,\ldots, f^m)$ as above
one can find a complex \eqref{ecomplex} such that
$\psi$ annihilates the associated residue current $R$
if and only if $\psi$ is in $\J$, see \cite{AW1} Section~7. 
However, in general the 
numbers $d_{n+1}^i$ may be very big and \eqref{alfred} 
then reflects the   obstruction for global solvability.  
For the purpose of this paper we  need 
more specialized complexes.
The Koszul complex will be of particular importance.

\begin{ex}[The Koszul complex]\label{koszul}
Let $f^1,\ldots, f^m$ be our given  homogeneous forms of  degrees $d_i$,
assume that $E_0$ is the trivial line bundle,  and let 
$$
E=(E^1\otimes\mathcal{O}(-d_1))\oplus
\cdots\oplus (E^{m}\otimes\mathcal{O}(-d_m)),
$$
where $E^i$ are trivial line bundles.
Let $e_i$   be basis elements  for $E^i$ and let $e_i^*$ be the dual basis elements.
We  take  
$$
E_k=\Lambda^k E=\sum'_{|I|=k}\mathcal{O}(-(d_{I_1}+\cdots + d_{I_k}))E^{I_1}\otimes\cdots
\otimes E^{I_k}
$$
and $f_k$ as interior multiplication with $f=\sum f^j e_j^*$. 
Now
$$
\sigma=\sum_j\frac{\overline{f^j(z)}}{|z|^{2d_j}}e_j/|f|^2_{E^*}
$$
is the section of $E$ with minimal norm such that $f\cdot \sigma=1$ outside $Z$, 
where
$$
|f(z)|_{E^*}^2=\sum_1^m|f^j(z)|^2|z|^{-2d_j}.
$$
Moreover, 
$$
U=|f|^{2\lambda}_{E^*}\sum_{k=1}^{m} \sigma\w (\dbar \sigma)^{k-1} \Big|_{\lambda=0}
$$
and
\begin{equation}\label{Rdef}
R=
\dbar |f|^{2\lambda}_{E^*}\w \sum_{k=1}^m \sigma\w (\dbar \sigma)^{k-1}\Big|_{\lambda=0},
\end{equation}
cf., \cite{A2} and \cite{A3}; here $|_{\lambda}$ means evaluation at $\lambda=0$ after
analytic continuation (the existence of which  is nontrivial
and part of the statement).

\smallskip

The condition \eqref{alfred} in this case is
\begin{equation}\label{alfred2}
m\le n\    \   \text{or}\ \   \rho -(d_1+\ldots +d_{n+1})\ge -n.
\end{equation}


If $\codim Z=m$, then the resulting residue current $R$
only  consists of the term $R_m$ (cf., Proposition~\ref{uppstuds} below); it 
coincides with the classical Coleff-Herrera product,
 cf., \cite{A3} p.\ 112, and hence the 
the duality theorem, \cite{P1} and \cite{DS}, 
holds, i.e., locally
$R\psi=0$ if and only if $\psi$ (locally) belongs to the ideal
sheaf $\J$ generated by $f$.
\end{ex}

\begin{ex}\label{ex2} 
These formulas become much simpler if we assume that all $d_j=d$. Then 
$$
\sigma=\sum_{j=1}^m\frac{\overline{f^j(z)} e_j}{|f(z)|^2}.
$$
If  $\bar f\cdot e=\sum \bar f^j e_j$ and $d\bar f\cdot e=\sum d\bar f^j\w e_j$, then
$$
\sigma\w(\dbar\sigma)^{k-1}=\frac{\bar f\cdot e\w (d\bar f\cdot e)^{k-1}}{|f|^{2k}},
$$
and it is easy to check that we can replace $|f|^{2\lambda}_{E^*}$ by $|f|^{2\lambda}$
in the definition of $U$ and $R$. Thus
$$
U=|f|^{2\lambda}\sum_{k=1}^{m} \frac{\bar f\cdot e\w (d\bar f\cdot e)^{k-1}}{|f|^{2k}} \Big|_{\lambda=0}
$$
and
\begin{equation}\label{Rdef2}
R=
\dbar |f|^{2\lambda}\w \sum_{k=1}^m \frac{\bar f\cdot e\w (d\bar f\cdot e)^{k-1}}{|f|^{2k}}
   \Big|_{\lambda=0}.
\end{equation}
\end{ex}

In \cite{AW2} was introduced the sheaf of {\it pseudomeromorphic} currents $\PM$
on a complex manifold $X$. It is a module over the sheaf of smooth forms and
closed under $\dbar$. For any $T\in\PM$ and variety $V$ there exists a
``restriction'' ${\bf 1}_VT$ that is in $\PM$ and has support on $V$.
The current ${\bf 1}_{X\setminus V}T=T-{\bf 1}_VT$ is determined
by the natural restriction of $T$ to  the open set $X\setminus V$ or equivalently,
$T={\bf 1}_VT$ if and only if $T$ has support on $V$. If $h$ is any tuple
of holomorphic functions with common zero set $V$, then
\begin{equation}\label{restdef}
{\bf 1}_{X\setminus V}T=|h|^{2\lambda} T|_{\lambda=0},
\end{equation}
where, as before, the right hand side is the evaluation at the origin
of a  current-valued holomorphic function.
We also have the following important fact (Corollary~2.4 in \cite{AW2}):

\begin{prop}\label{uppstuds}
If $T\in\PM$
has bidegree $(*,p)$ and support on a variety $T$ of codimension $k>p$,
then $T=0$.
\end{prop}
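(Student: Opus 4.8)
The assertion is local, so I fix a point of $X$, a coordinate ball $U$ around it, and write $n=\dim X$; let $V\subseteq U$ be the variety, of codimension $k>p$, carrying $\supp T$. It is tempting to apply the structure theorem of \cite{AW2}, which represents $T$ locally as a finite sum of pushforwards of elementary currents $\dbar(1/s_1^{a_1})\w\cdots\w\dbar(1/s_m^{a_m})\w\gamma$ ($\gamma$ smooth), for which the claim is immediate since the support lies in $\{s_1=\cdots=s_m=0\}$ while the $(0,\bullet)$-degree is $\ge m$; the obstruction is that the individual pushforwards need not be supported on $V$, so one cannot argue term by term — a modification or a projection can raise the codimension of the support of a single term. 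I would therefore proceed directly, by induction on $\dim V$ (with the empty case trivial), the one substantial input being the following property of $\PM$ established in \cite{AW2}: if $S\in\PM$ is supported on a coordinate hyperplane $\{z_j=0\}$, then $\bar z_j S=0$.

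For the inductive step let $V_{\mathrm{reg}}\subseteq V$ be the dense open locus of points near which $V$ is a smooth submanifold, and fix $x\in V_{\mathrm{reg}}$, of local codimension $c\ge k\ge p+1$. On a small neighborhood $U_x$ of $x$ I may choose coordinates so that $V\cap U_x=\{z_0=\cdots=z_{c-1}=0\}$. Then $\supp T$ and $\supp\dbar T$ (recall $\PM$ is closed under $\dbar$) both lie in $\{z_j=0\}$ for $j=0,\dots,c-1$, so the quoted property gives $\bar z_j T=0$ and $\bar z_j\dbar T=0$; since $d\bar z_j\w T=\dbar(\bar z_j T)-\bar z_j\dbar T$, I conclude $d\bar z_j\w T=0$ on $U_x$ for $j=0,\dots,p$. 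Expanding $T=\sum_{|I|=p}T_I\,d\bar z_I$ in this frame (the holomorphic degree being irrelevant), the relation $d\bar z_j\w T=0$ forces $T_I=0$ for every $I$ with $j\notin I$; requiring this simultaneously for all $j\in\{0,\dots,p\}$ would force $\{0,\dots,p\}\subseteq I$, impossible as $|I|=p$. Hence $T=0$ on $U_x$. Letting $x$ range over $V_{\mathrm{reg}}$ gives $T=0$ off $V':=V\setminus V_{\mathrm{reg}}$, a subvariety of strictly smaller dimension which still has codimension $>p$; thus $\supp T\subseteq V'$ and the induction continues. As the argument was local, $T=0$ on $X$.

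The step I expect to require the most care is the quoted structural fact — $\bar z_j S=0$ for $S\in\PM$ supported on $\{z_j=0\}$, and hence, by the computation above applied to $S$ and $\dbar S$, also $d\bar z_j\w S=0$. Its proof in \cite{AW2} rests on the description of $\PM$ as generated by elementary currents under pushforwards along modifications and simple projections and under multiplication by smooth forms: for an elementary current it reduces to the one-variable identity $\bar z\,\dbar(1/z^a)=0$, and one must check that this survives the pushforward operations. Granting that, the remaining ingredients are routine — in particular the induction on $\dim V$, which merely serves to propagate the generic-point degree count of the second paragraph across the whole of $V$.
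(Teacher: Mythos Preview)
The paper does not prove this proposition at all: it simply quotes it as Corollary~2.4 of \cite{AW2} and uses it as a black box. So there is nothing in the present paper to compare your argument to, beyond the parenthetical remark in Section~\ref{proofsec} that ``the proof of Proposition~\ref{uppstuds} goes through verbatim even in the non-smooth case,'' which at least tells you the authors have a specific proof from \cite{AW2} in mind.

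That said, your argument is correct and is essentially the standard one (and, as far as I recall, the one in \cite{AW2}). The whole weight rests on the annihilation property you isolate: if $S\in\PM$ is supported in $\{h=0\}$ for $h$ holomorphic, then $\bar h\,S=0$ and hence $d\bar h\wedge S=0$. Your derivation of $d\bar z_j\wedge T=0$ from $\bar z_j T=0$ and $\bar z_j\,\dbar T=0$ via Leibniz is fine, the linear-algebra step (a $(0,p)$-form annihilated by $p+1$ of the $d\bar z_j$ must vanish) is clean, and the induction on $\dim V$ to pass from the regular locus to all of $V$ is the right way to close the loop. Your caveat about the structure-theorem shortcut---that individual summands in the elementary decomposition need not be supported on $V$---is well taken and is exactly why one argues via the annihilation property instead.
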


For instance, the currents  $R$ and $U$ above are  pseudomeromorphic.

\section{Proofs of Theorems~\ref{hickelthm2} and \ref{newthm}}\label{proofsec}

Let $\J$ denote the sheaf over  $\P^n$ generated by our  homogeneous forms
$f^1,\ldots, f^m$, let 
\begin{equation}\label{blowup}
\nu\colon X_+\to \P^n
\end{equation}
be the normalization of the blowup along $\J$, and let 
$$
Y^+=\sum_i m_i^+ Y_i^+
$$
be the associated divisor in $X_+$, where $Y^+_i$ are its irreducible
components.   The varieties $\nu(Y^+_i)$ in
$\P^n$ are the so-called distinguished varieties associated to $\J$. 
For each $Y_i^+$ such that $\nu (Y_i^+)$ is entirely contained
in  the hyperplane at infinity,  $H=\{z_0=0\}$,  we let
$e_i$ be the order of vanishing of $\nu^*h$ at $Y_i^+$, where
$h(z)=z_0$, and  define
$$
\nu_{\infty}=\max \frac{m_i^+}{e_i},
$$
where the maximum is  taken over all such indices $i$. 

\smallskip

\smallskip
Let us now  consider the residue current $R$ in  \eqref{Rdef} obtained from the
Koszul complex.
Assume  that $\pi\colon\tilde X\to  \P^n$ is a log resolution of $\J$,
i.e., $\tilde X$ is a smooth modification
such that  $\pi^*\J$ is principal and its zero set $Y$ has 
normal crossings.  Then 
$$
\pi^*f= f^0  f',
$$ 
where 
$ f^0$ is a holomorphic section of a line bundle $ L\to  \tilde X$ that
defines the  divisor 
$$
Y=\sum_i m_i Y_i
$$
and $f'$ is a non-vanishing section of $\pi^*E^*\otimes  L^{-1}$. Moreover, 
$$
L=\otimes_i L_i^{m_i},
$$
and
$$
f^0=\otimes_i s_i^{ m_i},
$$
where $s_i$ are sections of $L_i$ that vanish  to the first order;
the normal crossing assumption means that
in local trivializations the $s_i$ are part of a local holomorphic coordinate system.
It follows that 
$$
\pi^*\sigma=(1/ f^0)\sigma',
$$
where $1/f^0$ is a meromorphic section of $L^{-1}$ and
$\sigma'$ is a smooth section of $\pi^*E\otimes L$.
Now (in this section $|\cdot |$ always refers to vector bundle norm)
\begin{equation}\label{hopplask}
\dbar |\pi^*f|^{2\lambda}\w \pi^* u_k=
\dbar |\pi^*f|^{2\lambda}\w \frac{1}{(f^0)^k}  u'_k,
\end{equation}
where
$$
u'_k=\sigma'\w(\dbar\sigma')^{k-1}
$$
is smooth. It follows that  \eqref{hopplask} has an analytic continuation to
$\lambda=0$  and the value there is
$$
\tilde R=\dbar \frac{1}{(f^0)^k}\w u_k';
$$
moreover, 
cf., \eqref{Rdef}, 
$$
R_k= \pi_*\tilde R_k.
$$
Since we have normal crossings we also have the decomposition
\begin{equation}\label{bella1}
\tilde R_k=\sum_j\tilde  R_{kj}=
\sum_j \dbar\frac{1}{s_j^{k  m_j}}\w \otimes_{i\neq j}
\frac{1}{s_i^{k m_i}}   u'_k
\end{equation}
and hence
\begin{equation}\label{bella2}
R_k=\sum_j R_{kj}=\sum_j \pi_*\tilde R_{kj}.
\end{equation}

\noindent {\bf Claim:}  $R_{kj}$ vanishes unless $\pi(Y_j)$ is 
a  distinguished variety of $\J$.
\smallskip

To see this first notice that in  $X_+$, 
$$
\nu^*f=f_+^0 f_+',
$$
where $f_+^0$ is a holomorphic section of the line bundle $L_+\to  X_+$ that
defines the divisor $Y^+$ and $f_+'$ is a non-vanishing section of
$\nu^*E^*\otimes L^{-1}$.
The log resolution  $\pi$   factorizes over $\nu$, i.e., we have  
\begin{equation}
\tilde X\stackrel{\tilde\nu}{\to} X_+\stackrel{\nu}{\to} \P^n.
\end{equation}
If  $\pi(Y_j)$
is not a distinguished variety of $\J$, then 
$\tilde\nu(Y_j)$ has at least codimension $2$ in $X_+$. 
Therefore
$$
\tilde\nu_* \big[\dbar\frac{1}{s_j^{k  m_j}}\w \otimes_{i\neq j}
\frac{1}{s_i^{k m_i}}\big]
$$
must vanish according to Proposition~\ref{uppstuds}
since it is a pseudomeromorphic $(0,1)$-current in $X_+$ 
with support on a variety with codimension at least $2$. (In  general
$X_+$ is not smooth but the proof of Proposition~\ref{uppstuds}
goes through verbatim even in the non-smooth case.) 
Notice that 
$$
\nu^*u_k=\frac{1}{(f_+^0)^k}u_{+,k}',
$$
where $u_{+,k}'$ is smooth,  and that 
$\tilde u_k'=\nu^* u_{+,k}'$.
It follows that
$
\tilde\nu_* \tilde R_{kj}=0
$
and hence
$
\pi_* \tilde R_{kj}=\nu_*\tilde\nu_* \tilde R_{kj}=0
$
as claimed.  

\smallskip
The resulting   decomposition of $R_k$ with respect to the distinguished varieties
is inspired by \cite{W} and \cite{JW}, where it  plays a fundamental role. 

\smallskip
We are now ready for the proofs.
We have already observed that
Theorem~\ref{hickelthm} follows from Theorem~\ref{hickelthm2}.

\begin{proof}[Proof of Theorem~\ref{hickelthm2}]
Let $\phi$ be the homogenization of $\Phi$ and let $\mu=\min(m,n)$.
By the definition of $\nu_\infty$,
we have that $\nu^* h^{\ulcorner\mu\nu_\infty \urcorner}$ must vanish to
order $\mu m_i^+$ on each  divisor $Y_i^+$ such that $\nu(Y^+_i)\subset H$. 
On the other hand, it follows  from \eqref{puss} that  
$\nu^* \phi$ vanishes to order $\mu m_i^+$ on any  other divisor $Y_i^+$.
Thus $\nu^*(\phi h^{\ulcorner\mu\nu_\infty \urcorner})$ vanishes
to order $\mu m_i^+$ on each  $Y_i^+$ and  therefore
\begin{equation}\label{olle}
|\phi h^{\ulcorner\mu\nu_\infty \urcorner}|\le C|f|^\mu
\end{equation}
on $\P^n$.  
Thus $\pi^*(\phi h^{\ulcorner\mu\nu_\infty \urcorner})$ must contain the factor
$s_j^{\mu m_j}$, which implies that it annihilates $\tilde R_{kj}$,
cf., \eqref{bella1} and \eqref{bella2},
for each $k\le\mu$. It follows that $\phi h^{\ulcorner\mu\nu_\infty \urcorner}$
annihilates the current $R$.
 
Now  $\phi h^{\ulcorner\mu\nu_\infty \urcorner}$ is a section of $\mathcal{O}(\rho)$
with $\rho=\deg\Phi+\ulcorner\mu\nu_\infty \urcorner$.
If necessary we raise the power of $h$ further so that \eqref{alfred2} holds.
Then  the first part of Theorem~\ref{hickelthm2} follows from 
Proposition~\ref{ormvrak} after dehomogenization. The  
second part concerning explicit representation follows
from Section~\ref{formelsec}. 
\end{proof}

\begin{remark}
Let $\bar\J$ denote the integral closure sheaf generated by $\J$. 
It is well known that $\xi\in \mathcal{O}_{\P^n}$ is in $\bar\J$ if and only if
$\nu^*\xi$ vanishes to order (at least) $m_i^+$ on $Y_i^+$, i.e., 
$$
\bar\J=\nu_*(\mathcal{O}(-Y^+)).
$$
In the same way, \eqref{olle} means that 
$\phi h^{\ulcorner\mu\nu_\infty \urcorner}$ belongs to the
integral closure of $\J^\mu$.
\end{remark}

\begin{remark}
One can get  rid of $\ulcorner\ \urcorner$ in \eqref{hickest2}  by the  following trick.
Assume that $\mu\nu_\infty =a/b$,  for integers $a,b$.
Let $\hat f(z)=f(z_0^b,\ldots,z_n^b)$ and similarly with $\hat\phi$.
Now 
$$
|h^a\hat\phi|\le C|\hat f|^\mu
$$
and as before we then have a solution $\Psi_j$ to 
$$
\sum_j\hat F^j \Psi_j=\hat\Phi
$$
with
$$
\deg \hat F^j\Psi_j\le \deg\hat\Phi+a=b\deg\Phi+a.
$$
However, one can then choose $\Psi$ of the form
$\Psi=\hat Q$, and it follows that
$$
\deg F^jQ_j\le \deg\Phi+a/b
$$
as desired. See \cite{Hick} for details.
\end{remark}

In the previous proof we killed  the residue by size estimates  in $\C^n$ 
as well as on the hyperplane $H$. 
In the proof of  Theorem~\ref{newthm} the residue calculus is  somewhat more involved
because then we will kill differen parts of the residue in different ways.

\begin{proof}[Proof of Theorem~\ref{newthm}]
We begin with the same set-up as in the previous proof.
Since $R$ is pseudomeromorphic, cf., Section~\ref{basic}
above,  we have the decomosition
$$
R={\bf 1}_{\C^n}R+{\bf 1}_HR,
$$
where ${\bf 1}_{\C^n}R$ is the natural extension to $\P^n$ of the
restriction of $R$ to $\C^n$. Since $f$ is a complete intersection here,
$R$  coincides with the Coleff-Herrera product, and by the duality theorem,
it follows that $h^{\rho-\deg\Phi}\phi{\bf 1}_{\C^n} R=0$.

Assume now that  $Z$ has no irreducible component contained in $H$.
Then $Z$ has codimension $m$ in $\P^n$ and by Proposition~\ref{uppstuds}
hence $R=R_m$.
Thus ${\bf 1}_HR={\bf 1}_HR_m$ has bidegree $(0,m)$ and support
on $H\cap Z$ that has codimension strictly larger than $m$. By
Proposition~\ref{uppstuds} it must therefore vanish. 
It follows that $h^{\rho-\deg\Phi}\phi R =0$ and since \eqref{alfred2}
is satisfied, the membership  follows.

\smallskip
We now consider the general case. We can choose the log resolution so that
also  $\pi^*h$ is a monomial in $s_i$.  
Notice, cf., \eqref{bella1} and \eqref{bella2}, that 
$$
|h|^{2\lambda}R_{jk}= \pi_* \big(|\pi^*h|^{2\lambda}\tilde R_{jk}\big)
$$
vanishes when  $\Re \lambda$ is large and hence for $\lambda=0$
if $s_j$ is a factor in $\pi^* h$, whereas the value at $\lambda=0$
is $R_{jk}$ if $s_j$ is not a factor in $\pi^* h$.
In view of \eqref{restdef} 
it follows that  ${\bf 1}_HR$ is the sum of 
$R_{kj}$ such that $\pi(Y_j)$ is
contained in $H$. 
Moreover, we  know that 
only $j$ corresponding to distinguished varieties give a contribution. 
Take such a $j$ and assume that $\tilde\nu(Y_j)=Y_i^+$.
We know that $\nu^*h^{\ulcorner\mu \nu_{\infty}\urcorner}$ vanishes  
at least to the same order as $\nu^*f^\mu$ does on $Y_i^+$, and hence
$\pi^*h^{\ulcorner\mu \nu_{\infty}\urcorner}$ must vanish at least to the 
same order as $\pi^*f^\mu$ on $Y_j$, i.e., 
$\pi^*h^{\ulcorner\mu \nu_{\infty}\urcorner}$ contains the factor
$s_j^{\mu m_j}$.
It follows that $\tilde R_{kj} \pi^*h^{\ulcorner\mu \nu_{\infty}\urcorner}=0$.

Summing up, we have that 
$Rh^{\ulcorner\mu \nu_{\infty}\urcorner}\phi=0$, and hence the
first part of the theorem is proved. The second part, again, is
treated in Section~\ref{formelsec}.
\end{proof}

The proof of Theorem~\ref{hickelthm2matris} is analogous, but
one has to use the so-called Buchsbaum-Rim complex which is
a generalization of the Koszul complex. See
\cite{A4} where the associated  currents  are discussed, and
special cases of Theorem~\ref{hickelthm2matris} are  proved.

\section{Explicit representation}\label{formelsec}

We first discuss, following  \cite{A1}, \cite{A7}, \cite{EG0},  and \cite{EG}, 
how one can generate representation
formulas for   holomorphic sections of a  vector bundle  $F\to\P^n$. 
Let $F_z$ denote the pull-back of $F$  to $\Pn_\zeta\times \Pn_z$ under the
natural projection $\Pn_\zeta\times \Pn_z\to \Pn_z$  
and define  $F_\zeta$  analogously.
Notice that 
\[
\eta = 2 \pi i \sum_0^n z_i \ddzi
\]
is a section of the bundle $\mathcal{O}_z(1) \otimes \mathcal{O}_\zeta (-1)
\otimes T_{1,0}(\Pn_\zeta)$ over $\P^n_\zeta\times\P^n_z$. 
If we express a projective form in homogeneous coordinates and contract 
with $\eta$  we get a new projective form, i.e., we have  a mapping 
\[
\delta_\eta : \mathcal{D}'_{\ell+1,q}(\mathcal{O}_\z(k) \otimes \mathcal{O}_z(j)) \to
  \mathcal{D}_{\ell,q}'(\mathcal{O}_\z(k-1) \otimes \mathcal{O}_z(j+1)),
\]
where $\mathcal{D}'_{\ell,q}(\mathcal{O}_\z(k) \otimes \mathcal{O}_z(j))$ denotes the
sheaf of currents on $\P^n_\zeta\times\P^n_z$
of bidegree $(\ell,q)$ in $\zeta$ and $(0,0)$ in $z$
that take  values in $\mathcal{O}_\z(k) 
\otimes \mathcal{O}_z(j)$. 
Given a vector bundle $L\to\P^n_\zeta\times\P^n_z$, let
\[
\L^{\nu}(L)=
\bigoplus_{j}\D'_{j,j+\nu}(\mathcal{O}_\z(j) \otimes \mathcal{O}_z(-j)\otimes  L).
\]
If $\nabla_\eta = \delta_\eta - \dbar$, 
where $\dbar=\dbar_\zeta$, 
then  $\nabla_\eta: \L^{\nu}(L) \to \L^{\nu+1}(L)$. Furthermore,
$\nabla_\eta$ obeys Leibniz' rule, and $\nabla_\eta^2 = 0$.

\smallskip
A  {\it weight} with respect to $F\to \P^n$  and a point $z\in\P^n$  
is a section  $g$ of $\L^{0}(\Hom(F_\zeta,F_z))$ such that $\nabla_\eta g =
  0$, $g$ is smooth for $\zeta$ close to $z$, 
and  $g_{0,0} = I_F$ when $\zeta=z$, 
where $g_{0,0}$ denotes the component of  $g$ with bidegree $(0,0)$,
and $I_F$ is the identity endomorphism on $F$.

\begin{prop} \label{hatsuyuki}
Let $g$ be a weight with respect to $F\to\P^n$ and  $z$,  and assume that
$\psi$ is  a holomorphic section of $F\otimes\mathcal{O}(-n)$.
We  then have  the representation formula
\[
\psi(z) = \int_{\Pn} g_{n,n} \psi. 
\]
\end{prop}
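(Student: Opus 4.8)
The plan is to exhibit $g_{n,n}\psi$ as the top-degree component of a $\nabla_\eta$-closed current and then appeal to a Stokes-type argument on $\P^n$. First I would recall the structure of the weight: writing $g=\sum_j g_j$ with $g_j$ of bidegree $(j,j)$ in $\zeta$ (and the matching twist $\mathcal O_\zeta(j)\otimes\mathcal O_z(-j)$), the condition $\nabla_\eta g=0$ decomposes as $\delta_\eta g_{j+1}=\dbar g_j$ for each $j$, together with $\delta_\eta g_0=0$. Since $\psi$ is holomorphic, $\nabla_\eta(g\psi)=(\nabla_\eta g)\psi=0$ as well (here I use that $\delta_\eta$ and $\dbar$ annihilate the pulled-back holomorphic section $\psi_\zeta$, and Leibniz' rule for $\nabla_\eta$ stated above). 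The form $g\psi$ lives in $\L^{0}(F_z\otimes F_\zeta^*\otimes F_\zeta)$ evaluated against $\psi$, i.e. it is an $F_z$-valued current on $\Pn_\zeta\times\Pn_z$, and for $z$ fixed we integrate its component of full bidegree $(n,n)$ in $\zeta$ over $\Pn_\zeta$.

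The main step is a Koppelman/Cauchy–Fantappiè type formula on $\P^n$: one needs a current $B$ on $\Pn_\zeta\times\Pn_z$ with a singularity along the diagonal $\Delta$ such that $\nabla_\eta B=\mathbf 1_\Delta - \omega$, where $\omega$ is a smooth form representing the diagonal's contribution away from $\Delta$ (concretely $B$ is built from $\eta$ and a metric on $\mathcal O(1)$, e.g. $B=\sum_k s\wedge(\dbar s)^{k-1}$ with $s=\bar\partial|\zeta|^2$-type data normalized by $\delta_\eta s=1$, and the regularization $|\cdot|^{2\lambda}$ trick to make sense of it near $\Delta$). Granting such $B$, I would form $g\wedge B\,\psi$; since both $g$ and $B$ satisfy inhomogeneous $\nabla_\eta$-equations one computes
\[
\nabla_\eta(g\wedge B\,\psi) = g\wedge(\mathbf 1_\Delta-\omega)\psi = (\text{term on }\Delta) - g\wedge\omega\,\psi.
\]
Integrating the $(n,n)$-in-$\zeta$ component over $\Pn_\zeta$ and using that $\int_{\Pn}\nabla_\eta(\cdot)=\int_\Pn(-\dbar)(\cdot)=0$ by Stokes (the $\delta_\eta$ part drops for degree reasons, and there is no boundary), the left side vanishes and one is left with $\psi(z)$ coming from the diagonal term against the smooth model, against $\int_\Pn g_{n,n}\psi$ from the other term. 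The normalization $g_{0,0}=I_F$ at $\zeta=z$ together with the fact that $\psi$ is a section of $F\otimes\mathcal O(-n)$ (so the degrees match: $g_{n,n}\psi$ is a projective $(n,n)$-form, i.e. $\mathcal O(0)$-valued and integrable) is exactly what pins down the diagonal contribution to be $\psi(z)$ and no more.

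The hard part, and the place where the twist $\mathcal O(-n)$ is essential, is the degree bookkeeping that guarantees (a) $g_{n,n}\psi$ is genuinely a scalar projective $(n,n)$-form so that $\int_\Pn g_{n,n}\psi$ makes sense, and (b) all the auxiliary weights/currents $B$ one uses to run the Koppelman argument actually exist as weights with respect to $F\otimes\mathcal O(-n)$ — this is where one invokes that $\delta_\eta$ shifts $\mathcal O_\zeta(k)\to\mathcal O_\zeta(k-1)$ and $\mathcal O_z(j)\to\mathcal O_z(j+1)$, so the $j$-graded pieces of $\L^\nu$ line up. Rather than constructing $B$ by hand I would instead cite the general principle from \cite{A1}, \cite{A7}, \cite{EG0}, \cite{EG}: if $g$ and $g'$ are both weights with respect to $F$ and $z$, then $g\wedge g'$ is again such a weight, and for \emph{any} weight $\gamma$ one has $\psi(z)=\int_\Pn\gamma_{n,n}\psi$; the present proposition is the instance obtained by composing $g$ with the explicit Cauchy–Fantappiè weight on $\P^n$ whose only surviving top component is the point mass at $z$. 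In the writeup I would state this composition lemma, verify $\nabla_\eta(g\wedge B)=0$ and the normalization at $\zeta=z$, and then reduce to the known base case $g=B$, for which the formula is the classical Cauchy–Fantappiè–Leray representation on projective space.
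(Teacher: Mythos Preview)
Your approach is essentially the paper's: build a current $B$ from an explicit section $b$ of $\mathcal O_z(-1)\otimes\mathcal O_\zeta(1)\otimes\Lambda^{1,0}$, prove $\nabla_\eta B = 1 - [z]$, compute $\nabla_\eta(B\wedge g) = g - [z]I_F$, extract the $(n,n)$-component, and apply Stokes. Two points of clarification, though.

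First, the key identity is $\nabla_\eta B = 1 - [z]$, with the scalar constant $1$ on the right, not $\mathbf 1_\Delta - \omega$ with a mysterious smooth form $\omega$. In the $\nabla_\eta$-calculus the ``smooth'' contribution is literally the function $1$ in bidegree $(0,0)$: since $\delta_\eta b = 1$ outside the diagonal and $B = b + b\wedge\dbar b + \cdots$, the only nonzero components of $\nabla_\eta B$ are $(\delta_\eta b)_{0,0}=1$ and the $(n,n)$-piece $-[z]$ coming from the top term. This is exactly what makes $(g\wedge\nabla_\eta B)_{n,n} = g_{n,n} - [z]g_{0,0} = g_{n,n} - [z]I_F$ drop out cleanly; your $\omega$ must be $1$ for the computation to give $\int g_{n,n}\psi$ rather than something more complicated. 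The paper writes down $b$ explicitly and verifies the identity by passing to affine coordinates.

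Second, your closing paragraph is circular. You write ``for \emph{any} weight $\gamma$ one has $\psi(z)=\int_{\P^n}\gamma_{n,n}\psi$'' and propose to reduce to a base case $g=B$; but that general statement \emph{is} the proposition, and $B$ is not a weight (it satisfies $\nabla_\eta B = 1-[z]$, not $\nabla_\eta B=0$). The composition lemma for weights is useful for manufacturing new weights, not for proving the representation formula itself. Stick with your main argument: it is the right one, once you sharpen the equation for $B$.
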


This formula appeared in \cite{EG0}
(Proposition~5.5), and it can be deduced  from \cite{EG};  
however, for the reader's convenience we sketch  a direct  argument.

\begin{proof}
Notice that 
$$
b=\frac{1}{2\pi i}\frac{|\zeta|^2\bar z\cdot d\zeta -(\bar z\cdot\zeta)\bar\zeta\cdot d\zeta}
{|\zeta|^2|z|^2-|\bar\zeta\cdot z|^2}
$$
is a $(1,0)$-form with values in  $\Ok_z(-1)\otimes\Ok_\zeta(1)$ that is smooth 
outside the diagonal
$\Delta$ in $\P^n_\zeta\times\P^n_z$.
Fix the point $z$ and let
$B=b+ b\w\dbar b+\cdots +b\w(\dbar b)^{n-1}$.
Then $\nabla_\eta B=1$ outside $z$, and we claim  that 
\begin{equation}\label{pyrt}
\nabla_\eta B=1-[z]
\end{equation}
 in the current sense, where $[z]$ is the  current
such that 
$$
\int_{\P^n} [z] \xi(\zeta)=\xi(z)
$$
for each smooth section $\xi$ of $\Ok(-n)$. Because of rotational invariance
it is enough to choose affine coordinates
$\zeta=(1,\zeta')=(1,\zeta_1',\ldots,\zeta_n')$ 
and assume  that $z=(1,0,\ldots,0)$.
Then $b$ becomes 
$$
b'=\frac{1}{2\pi i}\frac{\overline{\zeta'}\cdot d\zeta'}{|\zeta'|^2},
$$
and it is elementary to check, cf., \cite{A1} p.\ 5, that 
$(\delta_{\zeta'}-\dbar)(b'+b'\w \dbar b'+\cdots +b'\w(\dbar b')^{n-1}=1-[0]$.
Now  \eqref{pyrt} follows.
Thus
$$
\nabla_\eta(B\w g)=g-[z]\w g=g-[z] g_{0,0}=g-[z]I_F,
$$
and identifying  the top degree terms we get 
$$
\dbar(B\w g)_{n,n-1}=[z]I_F-g_{n,n}.
$$
Multiplying with $\psi$,  the proposition follows from Stokes' formula.
\end{proof}

Consider now a complex \eqref{ecomplex}. 
In order to represent membership in $\J=\Im f_1$,
we will find  a weight $g$ that contains $f_1(z)$ as a factor
and apply Proposition \ref{hatsuyuki}. 
To this end we introduce  a generalization of   so-called Hefer forms,
inspired by \cite{A7} and \cite{AW1}, 
to the case of non-trivial vector bundles.

\df{We say that $H= (H_k^\ell)$ is a Hefer morphism for the complex
  $E_\bullet$  in \eqref{ecomplex}  if $H_k^\ell$ are smooth sections of
\[
\L^{-k+\ell}(\Hom(E_{\zeta,k},E_{z,\ell}))
\]
that are holomorphic in $z$, 
$H_k^\ell=0$ for $k<\ell$, the term $(H_\ell^\ell)_{0,0}$ of bidegree $(0,0)$
is the identity $I_{E_\ell}$ on the diagonal $\Delta$, and
\begin{equation}\label{hrel}
\nabla_\eta H_k^\ell =H_{k-1}^\ell f_k -f_{\ell+1}(z) H_k^{\ell+1}, 
\end{equation}
where $f_k$ stands for  $f_k(\zeta)$.}

\smallskip
Notice that we do not require $H$ to be holomorphic in $\zeta$.


\begin{remark} We   can always find a Hefer morphism locally. 
To begin with we    can easily  find  $H_\ell^\ell$  with the stated
properties locally. 
Since $\eta$ is a complete intersection, the sheaf complex 
induced by $\nabla_\eta$ is exact except at
$\L^{0}$, whereas  a $\nabla_\eta$-closed section $\xi$ of 
$\L^{0}$ is locally exact if and only if
$\xi_{0,0}$ vanishes on the diagonal (see, e.g., \cite{A1}, Proposition~4.1). 
This latter condition is fulfilled by the right hand side of
$\nabla_\eta H^\ell_{\ell+1}=-f_{\ell+1}(z)H^{\ell+1}_{\ell+1}
+H^\ell_{\ell}f_{\ell+1}$
so there is locally a solution $H^\ell_{\ell+1}$.  One can  now  proceed by induction.
\end{remark}

Assume that $H$ is a Hefer morphism for $E_\bullet$ and let 
$U$ and $R$ be the associated currents. We can then form the
currents  $H^1U=\sum_j H^1_jU_j$ and $H^0R=\sum_j H^0_j R_j$.
To be precise with the signs one has to introduce a superbundle
structure on $E=\oplus E_k$;   then for instance $f$ is mapping of 
even order since it maps $E_k\to E_{k-1}$ (and therefore
$f$ anti-commutes with odd order forms)
whereas, e.g.,
$H$ is even since $H^\ell_k$  is a form of degree
$k-\ell$ (mod $2$)  that takes values in $\Hom(E_\ell,E_k)$,
giving another facor $k-\ell$ (mod $2$). See Section~5 in \cite{A7}
for details.

\begin{prop}\label{burdus}  
Assume that  $H$ is a Hefer morphism for the complex $E_\bullet$.
Then the current 
\begin{equation}\label{gformel}
f_1(z)H^1U+H^0R,
\end{equation}
is a weight with respect to $E_0$
and $z$ outside $Z$. 
If $\psi$ is a holomorphic section of $E_0 \otimes \mathcal{O}(-n)$,
then  we have  the representation
\begin{equation}\label{urtva}
\psi(z)=f_1(z)\int_{\Pn_\zeta}(H^1U)_{n,n}\psi+
\int_{\Pn_\zeta}(H^0R)_{n,n}\psi, \quad z\in\P^n.
\end{equation}
\end{prop}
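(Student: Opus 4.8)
The plan is to verify the two claims in Proposition~\ref{burdus} in turn: first that \eqref{gformel} is a weight in the sense defined above, and then to obtain the representation \eqref{urtva} by feeding this weight into Proposition~\ref{hatsuyuki}. The key algebraic input is the Hefer relation \eqref{hrel} together with the fundamental identity $\nabla_f U = I_{E_0}-R$ from \eqref{alban}, and the key structural fact is that $\nabla_\eta$ and $\nabla_f$ anticommute (in the appropriate superbundle sense of \cite{A7}), so that the combined operator $\nabla=\nabla_\eta - \nabla_f$ squares to zero and obeys Leibniz' rule.

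First I would check the defining properties of a weight for $g=f_1(z)H^1U+H^0R$. Smoothness for $\zeta$ near $z$: $U$ is smooth outside $Z$ and $R$ has support on $Z$, so $g$ is smooth for $\zeta$ near $z$ as long as $z\notin Z$; hence the weight property holds outside $Z$, as stated. The normalization: the component of bidegree $(0,0)$ in $\zeta$ of $H^1U$ comes from $(H^1_1)_{0,0}U_1$, and since $(H^1_1)_{0,0}=I_{E_1}$ on the diagonal while $f_1U_1=I_{E_0}$ by \eqref{alban}, the $(0,0)$-component of $g$ equals $f_1(z)U_1=I_{E_0}$ at $\zeta=z$ (the $H^0R$ term has positive bidegree in $\zeta$ since $R$ does, so it contributes nothing at bidegree $(0,0)$). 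The heart of the matter is $\nabla_\eta g=0$. Here I would compute $\nabla_\eta(f_1(z)H^1U)$ and $\nabla_\eta(H^0R)$ separately using Leibniz: $f_1(z)$ is holomorphic in $z$ and $\nabla_\eta$-closed, so it passes through; expanding $\nabla_\eta(H^k U_k)$ via \eqref{hrel} produces terms $H^{k-1}f_k U_k$ and $-f_1(z)H^{k}U_k$ (for the $\ell=0$ piece) plus the $\dbar$-part hitting $U_k$, and then substituting the structure equations \eqref{alban}, $f_{k+1}U_{k+1}-\dbar U_k=-R_k$, one gets a telescoping cancellation between the $f_1(z)H^1U$ block and the $H^0R$ block. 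The signs are exactly what the superbundle bookkeeping of \cite[Section~5]{A7} is designed to make work, and I would cite that for the sign conventions rather than re-derive them.

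Once $g$ is known to be a weight with respect to $E_0$ and $z$ (outside $Z$), the representation \eqref{urtva} is immediate: Proposition~\ref{hatsuyuki}, applied to the vector bundle $F=E_0$ and to a holomorphic section $\psi$ of $E_0\otimes\mathcal{O}(-n)$, gives $\psi(z)=\int_{\Pn}g_{n,n}\psi$, and $g_{n,n}=f_1(z)(H^1U)_{n,n}+(H^0R)_{n,n}$; since $f_1(z)$ does not depend on the integration variable $\zeta$ it factors out of the first integral, yielding \eqref{urtva}. One small point to address is that Proposition~\ref{hatsuyuki} as stated assumes a \emph{smooth} weight, whereas here $g$ is only a current (smooth outside $Z$); the standard remedy is that $U$ and $R$ are defined as analytic continuations of $|f|^{2\lambda}$-regularizations (Example~\ref{koszul}), so one runs the argument with the smooth weight $g^\lambda$ for $\Re\lambda\gg0$ and then passes to $\lambda=0$, the pairing with the holomorphic $\psi$ having an analytic continuation by the pseudomeromorphicity recalled in Section~\ref{basic}.

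The main obstacle I expect is precisely the sign/degree bookkeeping in the identity $\nabla_\eta g=0$: one must keep track of the parity of each $H^\ell_k$ (a form of degree $k-\ell$ valued in $\Hom(E_\ell,E_k)$, contributing two copies of $k-\ell$ mod $2$), of the forms $U_k,R_k$, and of the fact that $f$ lowers the $E$-grading, so that the various Leibniz terms carry the signs needed for the telescoping to close. This is routine given the superbundle formalism of \cite{A7}, but it is the only step where something can genuinely go wrong, so it is where I would be most careful.
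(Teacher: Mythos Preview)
Your approach is correct and would work, but it differs from the paper's in a way worth noting. You propose to verify directly that $g=f_1(z)H^1U+H^0R$ is $\nabla_\eta$-closed by expanding $\nabla_\eta(H^\ell_k U_k)$ via Leibniz, inserting the Hefer relation \eqref{hrel} and the structure equations \eqref{alban}, and telescoping. This works, but the sign bookkeeping you flag as the ``main obstacle'' is real and somewhat tedious. The paper sidesteps it entirely: it sets $\tilde g = H - \nabla_\eta(HU)$, so that the $\nabla_\eta$-closedness of its diagonal part is immediate from $\nabla_\eta^2=0$ (the diagonal part of $\nabla_\eta H=-f(z)H+Hf$ vanishes because $H^\ell_{\ell-1}=0$). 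It then computes $\tilde g$ using \eqref{skata} and identifies its $\Hom(E_{\zeta,0},E_{z,0})$-component with \eqref{gformel}. This packaging both avoids the telescoping and yields the more general formula in Remark~\ref{mainformel2} for free.

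A second, smaller difference: to pass from $z\notin Z$ to all $z$, you propose $|f|^{2\lambda}$-regularization of the weight. That works, but the paper takes the shorter route of observing that both sides of \eqref{urtva} are holomorphic in $z$, so the identity, once established on the dense open set $\P^n\setminus Z$, extends automatically. Your normalization check (that $g_{0,0}=I_{E_0}$ on the diagonal via $f_1U_1=I_{E_0}$) is fine and matches the paper's in spirit.
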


If $R\psi=0$ we thus  have the  explicit holomorphic solution
$$
q(z)=\int_{\Pn_\zeta}(H^1U)_{n,n}\psi
$$
to $f_1 q=\psi$.


\begin{remark}\label{mainformel2}
In \cite{AW1} occur  more general currents
$U^\ell_k$ and $R^\ell_k$ taking values in $\Hom(E_k,E_\ell)$. 
With the same proof as below we get the more general formula
$$
\psi(z)=f_{\ell+1}(z)\int_{\Pn_\zeta}(H^{\ell+1}U)_{n,n}\psi+
\int_{\Pn_\zeta}(H^\ell R)_{n,n}\psi+
\int_{\Pn_\zeta}(H^\ell U)_{n,n}  f_\ell\psi
$$
for holomorphic sections of $E_\ell\otimes\mathcal{O}(-n+\ell)$.
If  $f_\ell\psi=0$ and $R\psi=0$ 
we thus get  an explicit  holomorphic solution to $f_{\ell+1}q=\psi$. 
\end{remark}

\begin{proof}[Proof of Proposition~\ref{burdus}]
The first part of the proposition  follows in the same way as the corresponding
statement (5.4) in \cite{A7}. However, we will provide  an argument 
for a more general statement, which also implies the
more general formula in Remark~\ref{mainformel2}.

To this end let $R$ and $U$ denote the ``full'' currents mentioned in the
remark. Then
\begin{equation}\label{skata}
\nabla_f\circ U+U\circ \nabla_f=I_E-R,
\end{equation}
see \cite{AW1} Section~2.  Let
\begin{equation}\label{gdef}
\tilde g=H-\nabla_\eta(HU)
\end{equation}
and let $g$  be the components that take values in
 $\oplus_\ell\Hom(E_{\zeta,\ell}, E_{z,\ell})$. 
Since $(H^\ell_\ell)_{0,0}$ is the identity on $\Delta$ it follows that
$g$ is   indeed a weight with respect
to $E$. 
Recalling that  $H$ has even order and $f$ is odd,
and using \eqref{skata} we have
\begin{multline*}
\tilde g=H- (-f(z)H+Hf)U -H\nabla_\eta U=\\
H+f(z)HU -HfU+H\dbar U= \\
H+f(z)HU -H(fU+Uf-\dbar U-Uf)=\\
H+f(z)HU-H(I_{E_\zeta}-R)+HUf.
\end{multline*}
Now  \eqref{gformel} is the component of the last term that takes values in
$\Hom(E_{\zeta,0},E_{z,0})$, and hence \eqref{gformel} is a weight.

\smallskip
The  division formula \eqref{urtva} now follows from
Proposition~\ref{hatsuyuki} for $z$ outside $Z$,
and hence in general since both sides of \eqref{urtva}
are holomorphic. One gets the formula  in Remark~\ref{mainformel2} from the component
of $\tilde g$ that takes values in $\Hom(E_{\zeta,\ell}, E_{z,\ell})$.
\end{proof}

Assume now that $E_\bullet$ is a complex with $E_k$ of the form \eqref{dirsum}
and choose $\kappa$ such that $\kappa\ge d^i_k$ for all $i,k$. We can then
construct a Hefer morphism  for the complex $E_{\bullet}\otimes\Ok(\kappa)$.
Notice that the currents $U$ and $R$ that are associated to $E_\bullet$ are
also the associated currents  to $E_{\bullet}\otimes\Ok(\kappa)$. 
We thus  obtain a division formula for sections $\psi$
of $E_0\otimes\Ok(\kappa-n)$.

Let $E'_\bullet$ denote the complex of trivial bundles over
$\C^{n+1}$ that we get from $E_\bullet$, and let $F_k$ denote the
corresponding mappings (which are formally 
 the original matrices $f_k$).
Let  $\delta_{w-z}$ denote interior multiplication with
$$
2\pi i\sum_0^n (w_j-z_j)\frac{\partial}{\partial w_j}
$$
in $\C^{n+1}_w\times\C^{n+1}_z$.

\begin{prop} \label{kowalski}
There exist $(k-\ell,0)$-form-valued 
mappings 
$$
h_k^\ell=\sum_{ij}(h_k^\ell)_{ij} e_{\ell i} \otimes e_{k j}^\ast :
\C^{n+1}_w\times\C^{n+1}_z\to\Hom(E'_k , E'_\ell)),
$$
such that $h_k^\ell = 0$ for $k<\ell$, $h_\ell^\ell = I_{E'_\ell}$, and 
\be \label{fraser}
\delta_{z-w} h_k^\ell =h_{k-1}^\ell F_k(w) - F_{\ell+1}(z) h_k^{\ell+1},
\ee
and the coefficients in the form 
$(h_k^\ell)_{ij}$ are homogeneous polynomials of degree
$d_{k}^j-d_{\ell}^i-(k-\ell)$. 
\end{prop}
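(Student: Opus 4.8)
The plan is to construct the Hefer forms $h_k^\ell$ explicitly by a downward induction on $\ell$ and an upward induction on $k$, mirroring the local existence argument for Hefer morphisms given earlier but now exploiting that $\delta_{z-w}$ is a \emph{polynomial} operator on $\C^{n+1}_w\times\C^{n+1}_z$, so that the resulting forms automatically have polynomial coefficients of controlled degree. First I would record the base case: set $h_\ell^\ell=I_{E'_\ell}$ and $h_k^\ell=0$ for $k<\ell$; these trivially satisfy \eqref{fraser} in the corresponding ranges (for $k=\ell$ the right side reads $h_{\ell-1}^\ell F_\ell(w)-F_{\ell+1}(z)h_\ell^{\ell+1}=0-F_{\ell+1}(z)h_\ell^{\ell+1}$, which forces the first genuine step).

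Next I would carry out the inductive step. Fix $\ell$ and suppose $h_{k-1}^\ell$, $h_{k-1}^{\ell+1}$, $h_k^{\ell+1}$ have already been constructed with the asserted degree properties. Consider the $(k-\ell,0)$-form-valued mapping
\[
G:=h_{k-1}^\ell F_k(w)-F_{\ell+1}(z)h_k^{\ell+1}.
\]
One checks, using the two instances of \eqref{fraser} already known together with the complex relations $F_kF_{k+1}=0$ and $\delta_{z-w}^2=0$, that $\delta_{z-w}G=0$. Now the crucial algebraic fact is that the Koszul-type complex of polynomial modules generated by contraction with $\delta_{z-w}=2\pi i\sum_j(w_j-z_j)\partial/\partial w_j$ is exact in positive form-degree on $\C^{n+1}_w\times\C^{n+1}_z$ (the components $w_j-z_j$ form a regular sequence), so a $\delta_{z-w}$-closed form of positive degree $k-\ell\ge 1$ is $\delta_{z-w}$-exact, say $G=\delta_{z-w}h_k^\ell$, and one may choose the primitive $h_k^\ell$ to have polynomial coefficients. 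To pin down the degrees: $(h_{k-1}^\ell)_{ij}$ has degree $d_{k-1}^j-d_\ell^i-(k-1-\ell)$ and the entries of $F_k(w)$ have degree $d_{k-1}^\cdot-d_k^\cdot$, so each entry of $h_{k-1}^\ell F_k(w)$ — an entry of a $(k-1-\ell,0)$-form — has polynomial degree $d_k^j-d_\ell^i-(k-1-\ell)$; contracting with the degree-one operator $\delta_{z-w}$ and then taking a primitive under it preserves this count, yielding that $(h_k^\ell)_{ij}$ is a $(k-\ell,0)$-form with coefficients homogeneous of degree $d_k^j-d_\ell^i-(k-\ell)$, exactly as claimed. (When this number is negative the corresponding component simply vanishes.)

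The main obstacle I expect is the \emph{simultaneous} bookkeeping of two things at once: that one can choose the primitive $h_k^\ell$ with \emph{homogeneous polynomial} (not merely smooth, not merely holomorphic) coefficients of the precise degree, and that the choice is compatible across the whole double induction so that all the relations \eqref{fraser} hold on the nose. This is handled by working with the explicit homotopy operator for $\delta_{z-w}$ — the standard ``Hefer'' decomposition $p(w)-p(z)=\sum_j(w_j-z_j)p_j(w,z)$ iterated, which produces polynomial primitives of the correct bidegree and depends polynomially on the data — rather than an abstract exactness statement; one then only needs that the degree of $G$ above is independent of the two intermediate degree choices, which is the degree computation just sketched. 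Finally I would note that the homogeneity degrees match those dictated by $h_k^\ell$ being a section of $\L^{-k+\ell}(\Hom(E'_{z,k},E'_{w,\ell}))$ after the usual identification, so that these $h_k^\ell$ descend to genuine Hefer forms on $\P^n_\zeta\times\P^n_z$ for the twisted complex $E_\bullet\otimes\Ok(\kappa)$, which is what the next section needs.
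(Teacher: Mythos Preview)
Your approach is sound and is essentially the inductive strategy the paper itself sketches in the Remark following the definition of Hefer morphism; note however that the paper does \emph{not} actually prove Proposition~\ref{kowalski} but refers to \cite{EG2} for a proof and to Section~4 of \cite{A7} for an explicit closed formula for the $h^\ell_k$, remarking only that the degrees and homogeneities then need to be verified. So you are supplying an argument the paper delegates elsewhere. The cited route writes down $h^\ell_k$ in one stroke and reduces everything to checking identities and degrees; your route (exactness of the polynomial Koszul complex of the regular sequence $w_j-z_j$, together with the standard Hefer homotopy to produce polynomial primitives of the right degree) is the natural self-contained alternative.

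One point deserves more care than your ``one checks.'' If you treat the $h$'s and $F$'s as ordinary matrix-valued forms and apply the usual Leibniz rule for interior multiplication, the cross terms in $\delta_{z-w}G$ do \emph{not} cancel: both $F_{\ell+1}(z)$ and $F_k(w)$ are $0$-forms, so $\delta_{z-w}$ commutes with left and right multiplication by them, and one is left with $-2F_{\ell+1}(z)h_{k-1}^{\ell+1}F_k(w)$. The cancellation occurs only under the superbundle convention the paper invokes just before Proposition~\ref{burdus}: there $f$ has odd total degree, so $\delta_{z-w}$ \emph{anti}commutes with left multiplication by $F_{\ell+1}(z)$, which flips the sign of one cross term. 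Since Proposition~\ref{kowalski} is stated for plain matrices over $\C^{n+1}$, you should make this convention explicit in your write-up; alternatively, working entrywise with the explicit formula from \cite{A7} avoids the closedness check altogether. Your degree bookkeeping is correct.
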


For a proof, see \cite{EG2}. In Section~4 in \cite{A7} there is an explicit
formula that provides $h^\ell_k$. One has to verify, though,
that they get  the desired degrees and homogeneities.

\smallskip
Notice that 
\[
\alpha = \alpha_{0,0} + \alpha_{1,1} = \frac{z \cdot \bar \z}{|\z|^2} - \dbar
\frac{\bar \z \cdot d\z}{2 \pi i |\z|^2}
\]
is a well-defined smooth form in $\L^0(\Hom(\Ok_\zeta(1),\Ok_z(1)))$,
such that 
\begin{equation}\label{alfasluten}
\nabla_\eta\alpha=0,
\end{equation}
and $\alpha_{0,0}$ is equal to $I_{\mathcal{O}(1)}$ on the diagonal.
Thus $\alpha$ is weight with respect to $\Ok(1)$.
Furthermore, 
\[
\gamma_j=d\zeta_j-\frac{\bar\zeta\cdot d\zeta}{|\zeta|^2}\zeta_j
\]
is  a projective form and 
\begin{equation}\label{olle2}
\nabla_\eta \gamma_j=2\pi i(z_j-\alpha \zeta_j). 
\end{equation}
If $h(w,z)$ is a homogeneous form in $\C^{n+1}_w\times\C^{n+1}_z$ with differentials $dw$ 
and polynomial  coefficients, we let $\tau^*h$ be the projective 
form obtained by
replacing $w$ by $\alpha\zeta$ and $dw_j$ by $\gamma_j$.
We then have 
\begin{equation}\label{tau}
\nabla_\eta \tau^*h=\tau^*(\delta_{w-z}h),
\end{equation}
in light of (\ref{olle2}) and \eqref{alfasluten}.

\begin{prop}
Assuming that $\kappa\ge d_k^j$ for all $k$ and $j$ we define 
\[
H^\ell_k=\sum_{ij}(\tau^\ast h_k^\ell)_{ij}\w \alpha^{\kappa-d_k^j}
e_{\ell,i}\otimes e^*_{k,j}.
\]
Then  $(H_k^\ell)$ is a  Hefer morphism for the complex $E_\bullet\otimes\mathcal{O}(\kappa)$. 
\end{prop}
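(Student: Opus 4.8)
The plan is to verify, one by one, the four requirements in the definition of a Hefer morphism, now for the complex $E_\bullet\otimes\Ok(\kappa)$: that $H^\ell_k=0$ for $k<\ell$; that $H^\ell_k$ is a smooth section of $\L^{-k+\ell}(\Hom(E_{\zeta,k}\otimes\Ok_\zeta(\kappa),\,E_{z,\ell}\otimes\Ok_z(\kappa)))$ which is holomorphic in $z$; that the bidegree-$(0,0)$ part of $H^\ell_\ell$ equals the identity on the diagonal $\Delta$; and that the Hefer relation \eqref{hrel} holds. The first is immediate from Proposition~\ref{kowalski}, where $h^\ell_k=0$ for $k<\ell$. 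For the third, note that $h^\ell_\ell=I_{E'_\ell}$ has constant entries and no differential, so $\tau^* h^\ell_\ell=h^\ell_\ell$ and $H^\ell_\ell=\sum_i\alpha^{\kappa-d^i_\ell}\,e_{\ell,i}\otimes e^*_{\ell,i}$; its $(0,0)$-component is $\sum_i\alpha_{0,0}^{\kappa-d^i_\ell}\,e_{\ell,i}\otimes e^*_{\ell,i}$, which on $\Delta$ reduces to the identity, since $\alpha_{0,0}=I_{\Ok(1)}$ there and $\kappa\ge d^i_\ell$ by assumption.

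For the second requirement I would run a degree count. Since $\alpha$ is a weight with respect to $\Ok(1)$, each power $\alpha^p$ (with $p\ge0$) is a section of $\L^0(\Hom(\Ok_\zeta(p),\Ok_z(p)))$. The operation $\tau^*$ replaces $w_m$ by $\alpha\zeta_m$, which is $\Ok_z(1)$-valued of $\L$-degree $0$, and $dw_m$ by $\gamma_m$, which contributes a factor $\Ok_\zeta(1)$ and lowers the $\L$-degree by one; since, by Proposition~\ref{kowalski}, the $(i,j)$-entry of $h^\ell_k$ is a $(k-\ell,0)$-form with coefficients homogeneous of degree $d^j_k-d^i_\ell-(k-\ell)$, this places $(\tau^* h^\ell_k)_{ij}$ as an $\Ok_z(d^j_k-d^i_\ell)$-valued section in $\L^{-k+\ell}$. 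Wedging with $\alpha^{\kappa-d^j_k}$ — which is where the hypothesis $\kappa\ge d^j_k$ enters — twists this by $\Hom(\Ok_\zeta(\kappa-d^j_k),\Ok_z(\kappa-d^j_k))$, and $\Ok_z(d^j_k-d^i_\ell)\otimes\Ok_z(\kappa-d^j_k)\otimes\Ok_\zeta(-(\kappa-d^j_k))=\Ok_z(\kappa-d^i_\ell)\otimes\Ok_\zeta(d^j_k-\kappa)$ is exactly the $(i,j)$-slot of $\Hom(E_{\zeta,k}\otimes\Ok_\zeta(\kappa),E_{z,\ell}\otimes\Ok_z(\kappa))$; holomorphy in $z$ is clear because $\alpha$, $\gamma_m$ and $h^\ell_k$ are holomorphic in $z$.

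The remaining point is the relation \eqref{hrel}. Using $\nabla_\eta\alpha=0$ from \eqref{alfasluten} together with Leibniz' rule, $\nabla_\eta$ passes each factor $\alpha^{\kappa-d^j_k}$, so
\[
\nabla_\eta H^\ell_k=\sum_{i,j}(\nabla_\eta\tau^* h^\ell_k)_{ij}\w\alpha^{\kappa-d^j_k}\,e_{\ell,i}\otimes e^*_{k,j}.
\]
By \eqref{tau} and the identity \eqref{fraser} of Proposition~\ref{kowalski} one has $\nabla_\eta\tau^* h^\ell_k=\tau^*(h^\ell_{k-1}F_k(w))-\tau^*(F_{\ell+1}(z)h^{\ell+1}_k)$, and $\tau^*$ leaves $F_{\ell+1}(z)$ untouched while, since the $(a,j)$-entry $f^{aj}_k$ of $F_k$ is homogeneous of degree $d^j_k-d^a_{k-1}$, it sends $(F_k(w))_{aj}$ to $\alpha^{d^j_k-d^a_{k-1}}f^{aj}_k(\zeta)$. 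Plugging this in and using that $\alpha$ has even total degree and that $\alpha^{d^j_k-d^a_{k-1}}\w\alpha^{\kappa-d^j_k}=\alpha^{\kappa-d^a_{k-1}}$, the first term collapses to $\sum_{i,a,j}(\tau^* h^\ell_{k-1})_{ia}\w\alpha^{\kappa-d^a_{k-1}}f^{aj}_k(\zeta)\,e_{\ell,i}\otimes e^*_{k,j}=H^\ell_{k-1}f_k$, while the second term is $-f_{\ell+1}(z)H^{\ell+1}_k$; this is precisely \eqref{hrel} (recall that $f_k$ there means $f_k(\zeta)$).

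I expect the main obstacle to be exactly this twist arithmetic: one must check that the single exponent $\kappa-d^j_k$ does double duty, both placing $H^\ell_k$ in the correct $(i,j)$-slot of the $\kappa$-twisted $\Hom$-bundle in the second step and, via $\alpha^{d^j_k-d^a_{k-1}}\w\alpha^{\kappa-d^j_k}=\alpha^{\kappa-d^a_{k-1}}$, carrying out the reassembly into $H^\ell_{k-1}f_k$ in the third step. Beyond that one only has to keep track of the signs coming from the superbundle structure on $E$ (as in \cite{A7}, Section~5) and of the relative sign between $\delta_{w-z}$ and $\delta_{z-w}$ occurring in \eqref{tau} and \eqref{fraser}, which is a matter of fixing conventions.
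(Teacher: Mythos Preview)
Your proof is correct and follows essentially the same route as the paper: the same degree count placing $(\tau^* h^\ell_k)_{ij}\w\alpha^{\kappa-d^j_k}$ in the right slot of $\L^{-k+\ell}(\Hom(E_{\zeta,k}\otimes\Ok_\zeta(\kappa),E_{z,\ell}\otimes\Ok_z(\kappa)))$, and the same verification of \eqref{hrel} via $\nabla_\eta\alpha=0$, \eqref{tau}, \eqref{fraser}, and the key identity $\tau^*(F_k(w))_{aj}=\alpha^{d^j_k-d^a_{k-1}}f^{aj}_k(\zeta)$. Your remark about the $\delta_{w-z}$ versus $\delta_{z-w}$ sign is apt; the paper handles this implicitly.
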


\begin{proof}
First notice that $\alpha$ is in $\L^0(\mathcal{O}_z(1)\otimes\mathcal{O}_{\z}(-1))$ and hence
$\alpha \zeta_\nu$ and $z_\nu$ are in 
$\L^0(\mathcal{O}_z(1)\otimes\mathcal{O}_{\z}(0))$.  It follows that $\tau^* (h_k^\ell)_{ij}$
is a section of $\L^{-k+\ell}(\mathcal{O}_z(d_k^j-d_\ell^i)\otimes\mathcal{O}_{\z}(0))$ and hence
$\alpha^{\kappa-d_k^j}\w (\tau^\ast h_k^\ell)_{ij}$ is a section of 
$\L^{-k+\ell}(\mathcal{O}_z(\kappa-d_\ell^i)\otimes\mathcal{O}_{\z}(-\kappa+ d_k^j))$.
This means that  $H^\ell_k$ is  indeed a section
of $\L^{-k+\ell}(\Hom(E_{\zeta,k}\otimes\mathcal{O}_\zeta(\kappa),E_{z,\ell}
\otimes\mathcal{O}_z(\kappa)))$. 

It is readily checked that $(H^\ell_\ell)_{0,0}$ equals $I_{E_\ell}=I_{E_\ell\otimes\Ok(\kappa)}$ 
on the diagonal. We now show that (\ref{hrel}) holds. 
Using (\ref{olle2}),  (\ref{fraser}) and \eqref{alfasluten} 
we have that
\begin{multline*}
(\nabla_\eta H^\ell_k)_{ij} 
= \nabla_\eta [\alpha^{\kappa-d_k^j}\w\tau^*(h_k^\ell)_{ij}]
=\alpha^{\kappa-d_k^j}\w\tau^*\delta_{w-z}(h_k^\ell)_{jk}= \\
\alpha^{\kappa-d_k^j}\w
\tau^*\sum_\nu(F_{\ell+1})_{i\nu}(z)(h_k^{\ell+1})_{\nu j}+
\alpha^{\kappa-d_k^j}\w \tau^*\sum_\nu
(h_{k-1}^\ell)_{i\nu}(F_k(w))_{\nu j}.
\end{multline*}
The next to last term is 
$$
-[f(z)_{\ell+1} H_k^{\ell+1}]_{ij}
$$
and since 
$$
\tau^*(F_k(w))=\alpha^{d_k^j-d_{k-1}^\nu} f_k(\zeta),
$$
the last term is 
$$
[H_{k-1}^\ell f_k]_{ij}.
$$
Thus the proposition  follows. 
\end{proof}

\section{Examples}

Let us  compute a solution formula corresponding to the Koszul complex, 
cf., Example~\ref{koszul}. Then we  first have to find a Hefer morphism.
Let $ \tilde h_j(w,z)$ be $(1,0)$-forms in $\C^{n+1}\times\C^{n+1}$ 
of polynomial degrees $d_j-1$ such that
$$
\delta_{w-z} \tilde h_j=f^j(w)-f^j(z)
$$
and let 
$
h_j=\tau^*\tilde h_j$.  
We only have to care about $k\le \min(m,n+1)$ so we assume that
$$
\kappa\ge d_1+\ldots +d_{\min(m,n+1)}.
$$
Then 
$$
H^\ell_k=\sum'_{|I|=\ell}\sum'_{|J|=k-\ell}\pm
h_{J_1}\w\ldots\w h_{J_{k-\ell}}\w e_I\otimes e^*_{IJ}\w\alpha^{\kappa-(d_{J_1}+\cdots +d_{J_{k-\ell}}
+d_{I_1}+\cdots +d_{I_k})}
$$
is a Hefer morphism.  The components of most interest for us are 
$H_k^0$ and $H_k^1$. Since 
$$
H_k^0=\sum'_{|J|=k}\pm
h_{J_1}\w\ldots\w h_{J_{k}}\w  e^*_{J}\w\alpha^{\kappa-(d_{J_1}+\cdots +d_{J_{k-\ell}})},
$$
it can be more compactly written  as 
$$
H_k^0=\alpha^\kappa \w (\delta_{\hat h})_k
$$
where $\delta_{\hat h}$ denotes formal interior multiplication with
$$
\hat h=\sum \alpha^{-d_j}\w h_j\w e_j^*
$$
and $(\delta_{\hat h})_k=\delta_{\hat h}^k/k!.$
In the same way
$$
H_k^1=\alpha^{\kappa}\w N(\delta_{\hat h})_{k-1},
$$
where
$$
N=\sum_j \alpha^{-d_j}e_j\otimes e_j^*.
$$

\smallskip

Assume now that $\psi$ is a section of $\mathcal{O}(\rho)$ where   $\rho=\kappa-n$. 
We then have the decomposition \eqref{urtva}.
If  in addition $R\phi=0$ we thus  have  that
$$
\psi(z)=\sum f^j(z)\cdot q_j(z)=\delta_{f(z)} q(z)
$$
where, cf., Example~\ref{koszul},
\begin{equation}\label{hurra}
q(z)=\sum_{k=1}^{\min(m,n+1)}
\int_{\Pn_\zeta} \alpha^\kappa\w N (\delta_{\hat h})_{k-1} (\sigma\w (\dbar\sigma)^{k-1}) \psi.
\end{equation}
With the assumptions in Theorem~\ref{hickelthm2} or \ref{newthm}, 
the proofs in Section~\ref{proofsec} show that
$\psi=z_0^{\rho-\deg\Phi}\phi$ annihilates the residue $R$,
for appropriate choice of $\rho$,  and hence
\eqref{hurra} is an explicit solution to the division problem
(after dehomogenization).

\begin{ex} As in Example~\ref{ex2} let us assume that $d_j=d$.  
If  $h=\sum h_j\w e_j^*$, then 
$$
H^1_k=\alpha^{\kappa -dk}(\delta_h)_{k-1}.
$$
Then our solution $q$ in  \eqref{hurra}   takes values in $\Ok(\rho-d)=\Ok(\kappa-n-d)$,
and it can be written as 
\begin{equation}\label{hurra2}
q=\sum_{k=1}^{\min(m,n+1)}
\int_{\Pn} \alpha^{\rho+n-dk}\w (\delta_{h})_{k-1} \frac{\bar f\cdot e\w(d\bar f\cdot e)^{k-1}}
{|f|^{2k}}\psi.
\end{equation}
\end{ex}

If $f$ has no zeros at all, then 
\eqref{hurra} provides a completely explicit representation
of Macaulay's theorem  and the integrand is smooth. 
If $m\le n$ and  $|\psi|\le |f|^{\min(m,n)}$, then 
$U\psi$ is  integrable  so 
\eqref{hurra} (and in particular \eqref{hurra2}) is  a convergent integral. In general,
however, $U\psi$ may be  a distribution of  higher order than zero,
and then \eqref{hurra} must be regarded as a principal value.
For instance one can multiply by $|f|^{2\lambda}$ and put
$\lambda=0$, cf., Example~\ref{koszul}. One can just as well
define it as a classical principal value, see, e.g., \cite{A2}, 
$$
\lim_{\epsilon\to 0}
\sum_{k=1}^{\min(m,n+1)}
\int_{\Pn} \chi(|f|/\epsilon)
\alpha^\kappa\w N (\delta_h)_{k-1} (\sigma\w (\dbar\sigma)^{k-1}) \psi,
$$
where $\chi(t)$ is (a smooth approximand of) the characteristic function
for the interval $[1,\infty)$.

\begin{remark}
If $f^j$ are rational we can choose rational  Hefer polynomials $\tilde h_j$,
and  then $h_j$ are rational expressions in $\alpha\zeta$, $z$, and $\gamma_i$.
It is possible that the resulting solution then actually is rational if
$\psi$ is rational but we have no argument.
\end{remark}

\smallskip

Also in the case of the Buchsbaum-Rim complex
one can find quite simple
expressions for a Hefer morphism for the corresponding
homogeneous complex,  see Section~6 in \cite{A7}. One then
obtain the projective Hefer morphism following  Section~\ref{formelsec} above.
Therefore we get an explicit  
representation   for the solutions in Theorem~\ref{hickelthm2matris}
as well; however we omit the details.

\def\listing#1#2#3{{\sc #1}:\ {\it #2},\ #3.}

\end{document}